\newtheorem{theorem}{Theorem}[section]
\newtheorem{lemma}[theorem]{Lemma}
\newtheorem{corollary}[theorem]{Corollary}
\newtheorem{proposition}[theorem]{Proposition}
\theoremstyle{definition}
\newtheorem{example}[theorem]{Example}
\newtheorem{obs}[theorem]{Observation}
\newtheorem{question}[theorem]{Question}
\theoremstyle{remark}
\DeclareMathOperator{\rank}{rank}
\renewcommand{\i}{\rm i}
\begin{document}
\title{Matrix tree theorem for the net Laplacian matrix of a signed graph}
\author{Sudipta Mallik}
\affil{\small Department of Mathematics and Statistics, Northern Arizona University, 801 S. Osborne Dr.\\ PO Box: 5717, Flagstaff, AZ 86011, USA 

sudipta.mallik@nau.edu}

\maketitle
\begin{abstract}
For a simple signed graph $G$ with the adjacency matrix $A$ and net degree matrix $D^{\pm}$, the net Laplacian matrix is $L^{\pm}=D^{\pm}-A$. We introduce a new oriented incidence matrix $N^{\pm}$ which can keep track of the sign as well as the orientation of each edge of $G$. Also  $L^{\pm}=N^{\pm}(N^{\pm})^T$. Using this decomposition, we find the number of both positive and negative spanning trees of $G$ in terms of the principal minors of $L^{\pm}$ generalizing the Matrix Tree Theorem for an unsigned graph. We present similar results for the signless net Laplacian matrix $Q^{\pm}=D^{\pm}+A$ along with a combinatorial formula for its determinant.
\end{abstract}

%Key words: Incidence matrix, Signed graph, Net Laplacian matrix, Matrix tree theorem
%MSC2020 classifications 05C50; 05C22; 15B99
%%05C50 Graphs and linear algebra (matrices, eigenvalues, etc.), 05C22 Signed and weighted graphs, 15B99 None of the above, but in this section (15Bxx Special matrices) 

\section{Introduction}
A {\it signed graph} $G = \{V, E, \sigma \}$ is defined to be a graph on vertex set $V = \{v_1,\ldots,v_n\}$ and  edge set $E = \{e_1,\ldots,e_m\}$, where each edge $e_{\ell}$ has a sign $\sigma(e_{\ell}) \in \{1,-1\}$. An edge with a positive sign is a {\it positive edge} and an edge with a negative sign is a {\it negative edge}.  Throughout this article, we consider only simple signed graphs, i.e., signed graphs that do not have loops and multiple edges. The {\it adjacency matrix} of signed graph $G$ is an $n\times n$ binary matrix $A=[a_{ij}]$ where $a_{ij}$ is $\sigma\{i,j\}$ if $\{i,j\}$ is an edge and $0$ otherwise. Note that $|A|$, obtained from $A$ by taking absolute values of its entries, is the adjacency matrix of the underlying unsigned graph $|G|$ of the signed graph $G$. Note that $|G|$ can be thought as the signed graph obtained from $G$ by making the sign of each edge of $G$ positive.  The {\it degree matrix} of $G$ and $|G|$, denoted by $D$, is an $n\times n$ diagonal matrix with the degrees of vertices  of $G$ as diagonal entries. The {\it net degree} of a vertex $v$, denoted by $d^{\pm}(v)$, in signed graph $G$ is the number of positive edges minus that of negative edges incident with $v$.  The {\it net degree matrix} of $G$, denoted by $D^{\pm}$, is an $n\times n$ diagonal matrix with the net-degrees of vertices  of $G$ as diagonal entries.  The {\it Laplacian}, {\it signless Laplacian}, {\it net Laplacian}, and {\it signless net Laplacian}  of signed graph $G$ are defined as $L=D-A$, $Q=D+A$, $L^{\pm}=D^{\pm}-A$, and $Q^{\pm}=D^{\pm}+A$ respectively. Note that the entries of the signless Laplacian $Q$ and the signless net Laplacian $Q^{\pm}$ are not necessarily signless. The author could not come up with better names for $Q$ and $Q^{\pm}$.

\begin{example}
For the signed paw  $G$  in Figure \ref{paw},  \[L=D-A=\left[\begin{array}{rrrr}
    1&-1&0&0\\
	-1&3&1&-1\\
	0&1&2&1\\
	0&-1&1&2
\end{array} \right],
Q=D+A=\left[\begin{array}{rrrr}
    1&1&0&0\\
	1&3&-1&1\\
	0&-1&2&-1\\
	0&1&-1&2
\end{array} \right],\]
\[L^{\pm}=D^{\pm}-A=\left[\begin{array}{rrrr}
    1&-1&0&0\\
	-1&1&1&-1\\
	0&1&-2&1\\
	0&-1&1&0
\end{array} \right],
Q^{\pm}=D^{\pm}+A=\left[\begin{array}{rrrr}
    1&1&0&0\\
	1&1&-1&1\\
	0&-1&-2&-1\\
	0&1&-1&0
\end{array} \right].
\]

\begin{figure}
	\begin{center}
	\begin{tikzpicture}[scale=1.1, colorstyle/.style={circle, fill, black, scale = .5}, >=stealth]
		
		\node (1) at (0,1)[colorstyle, label=above:$1$]{};
		\node (2) at (0,0)[colorstyle, label=below:$2$]{};
		\node (3) at (-1,-1)[colorstyle, label=below:$3$]{};
		\node (4) at (1,-1)[colorstyle, label=below:$4$]{};	
		
		\draw [] (1)--(2)--(3)--(4)--(2);
		\node at (0.85,0)[label=below:{$+e_4$}]{};
		\node at (-0.95,0)[label=below:{$-e_2$}]{};
		\node at (0,-.8)[label=below:{$-e_3$}]{};
		\node at (-.25,0.5)[label=right:{$+e_1$}]{};
		
		%the right graph
		\node (1b) at (5,1)[colorstyle, label=above:$1$]{};
		\node (2b) at (5,0)[colorstyle, label=below:$2$]{};
		\node (3b) at (4,-1)[colorstyle, label=below:$3$]{};
		\node (4b) at (6,-1)[colorstyle, label=below:$4$]{};	
		
 		\draw[->] (1b) edge (2b)(3b) edge (2b)(3b) edge (4b)(2b) edge (4b);
		\node at (5.85,0)[label=below:{$+e_4$}]{};
		\node at (4.05,0)[label=below:{$-e_2$}]{};
		\node at (5,-.8)[label=below:{$-e_3$}]{};
		\node at (4.75,0.5)[label=right:{$+e_1$}]{};

	\end{tikzpicture}
	\caption{Signed paw $G$ and one of its orientation}\label{paw}
	\end{center}
	\end{figure}
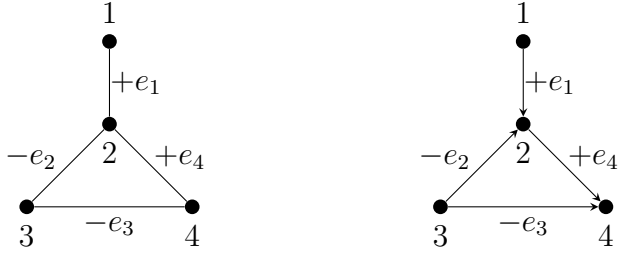
\end{example}

\begin{obs}
Suppose $-G$ is the signed graph obtained from the signed graph $G$ by reversing the sign of each of its edges. The following shows the relations among matrices associated with $G$ and $-G$:
\begin{enumerate}
\item $L_{-G}=D_{-G}-A_{-G}=D_{G}+A_{G}=Q_{G}$.

\item $Q_{-G}=D_{-G}+A_{-G}=D_{G}-A_{G}=L_{G}$.

\item $L_{-G}^{\pm}=D^{\pm}_{-G}-A_{-G}=-D^{\pm}_{G}+A_{G}=-L_{G}^{\pm}$.

\item $Q_{-G}^{\pm}=D^{\pm}_{-G}+A_{-G}=-D^{\pm}_{G}-A_{G}=-Q_{G}^{\pm}$.
\end{enumerate} 
\end{obs}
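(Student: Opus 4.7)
The plan is to reduce all four identities to three base observations about how the constituent matrices transform under the global sign reversal $G \mapsto -G$, and then just substitute into the definitions $L = D - A$, $Q = D + A$, $L^{\pm} = D^{\pm} - A$, $Q^{\pm} = D^{\pm} + A$. Nothing combinatorial is needed; this is an entry-by-entry check.

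First I would establish the following three facts. (i) Since $-G$ has the same underlying unsigned graph as $G$ but with every edge sign flipped, the $(i,j)$ entry of $A_{-G}$ is $-\sigma\{i,j\}$ whenever $\{i,j\}$ is an edge and $0$ otherwise, so $A_{-G} = -A_G$. (ii) The ordinary degree matrix $D$ is built from the degrees in the underlying graph, which do not see edge signs; hence $D_{-G} = D_G$. (iii) The net degree of a vertex $v$ is (positive edges at $v$) minus (negative edges at $v$); flipping all signs swaps these two counts, so $d^{\pm}_{-G}(v) = -d^{\pm}_G(v)$ and therefore $D^{\pm}_{-G} = -D^{\pm}_G$.

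With these three identities in hand, each of the four parts is an immediate one-line substitution. For (1), $L_{-G} = D_{-G} - A_{-G} = D_G + A_G = Q_G$; for (2), $Q_{-G} = D_{-G} + A_{-G} = D_G - A_G = L_G$; for (3), $L^{\pm}_{-G} = D^{\pm}_{-G} - A_{-G} = -D^{\pm}_G + A_G = -(D^{\pm}_G - A_G) = -L^{\pm}_G$; and for (4), $Q^{\pm}_{-G} = D^{\pm}_{-G} + A_{-G} = -D^{\pm}_G - A_G = -Q^{\pm}_G$.

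There is no real obstacle here; the only subtlety worth flagging is the asymmetry between the Laplacian pair and the net Laplacian pair. In (1)--(2) the diagonal piece $D$ is invariant, so flipping the off-diagonal piece interchanges $L$ and $Q$. In (3)--(4) the diagonal piece $D^{\pm}$ also flips, so both $L^{\pm}$ and $Q^{\pm}$ simply negate rather than swap. Making that contrast explicit is the one place where a reader might otherwise be momentarily surprised, so I would highlight it right after presenting the three base identities.
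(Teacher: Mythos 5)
Your proof is correct and matches the paper's (implicit) argument exactly: the observation's chains of equalities already encode the three base facts $A_{-G}=-A_G$, $D_{-G}=D_G$, and $D^{\pm}_{-G}=-D^{\pm}_G$, followed by substitution into the definitions. Your explicit statement of these facts and the remark on why $L,Q$ swap while $L^{\pm},Q^{\pm}$ merely negate is a faithful, slightly more detailed rendering of the same reasoning.
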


We define a new incidence matrix of a signed graph $G$ which is called the {\it net incidence  matrix}, denoted by $M^{\pm}=[m_{ij}]$, as follows: $M^{\pm}$ is an $n\times m$  matrix with rows indexed by vertices and columns indexed by edges where each column of $M^{\pm}$ has exactly two nonzero entries  $m_{i \ell}=m_{j \ell}$ equaling to $1$ if  $e_\ell = \{i,j\}$ is a positive edge  and the imaginary number $\i$ if  $e_\ell = \{i,j\}$ is a negative edge.

\begin{example}
For the signed paw  $G$  in Figure \ref{paw},  \[M^{\pm}=\left[\begin{array}{rrrr}
    1&0  &0     &0\\
	1&\i &0     &1\\
	0&\i &\i    &0\\
	0&0  &\i    &1
\end{array} \right].\]
\end{example}

Similarly we define a new oriented incidence matrix of a signed graph $G$ which is called an {\it oriented net incidence  matrix}, denoted by $N^{\pm}=[n_{ij}]$, as follows: $N^{\pm}$ is obtained from the net incidence  matrix of $G$ by multiplying exactly one of the two nonzero entries of each column by $-1$. An  oriented net incidence  matrix $N^{\pm}=[n_{ij}]$ of a signed graph $G$ induces the following orientation of edges: edge $e_\ell = \{i,j\}$ is oriented as $(i,j)$ (i.e., vertex $i$ to vertex $j$) if $m_{i \ell}>m_{j \ell}$ on the real or imaginary axis.  In the literature on signed graph, edges are oriented with two arrows giving a bidirected graph \cite{Belardo1}. When each edge of a signed graph is singly oriented with one arrow, the oriented net incidence matrix can keep track of the sign as well as the orientation of each edge.

\begin{example}
For the oriented signed paw  $G$  in Figure \ref{paw},  \[N^{\pm}=\left[\begin{array}{rrrr}
    1&0   &0     &0\\
   -1&-\i &0     &1\\
	0&\i  &\i    &0\\
	0&0   &-\i   &-1
\end{array} \right].\]
\end{example}

For some literature on signed graphs and associated matrices, see \cite{Zaslavsky,Belardo1,Belardo2}. In \cite{Belardo1}, the coefficients of the characteristic polynomial of the Laplacian matrix of a signed graph were studied. 
The net Laplacian of a signed graph has recently started getting more attention. For example, it was used in the study of controllability of undirected signed graphs \cite{Gao}.
Stani\' c studied the spectrum of the net Laplacian matrix of a signed graph in \cite{Stanic1}. He mentioned the lack of a decomposition for the net Laplacian of a signed graph similar to that of Laplacian as the product of an incidence matrix and its transpose. The net incidence matrix defined above provides such a decomposition which is explained in Section 2. As a consequence of this decomposition,  we obtain a matrix tree theorem analog for the net Laplacian of a signed graph in Section 3. In recent years, different combinatorial aspects of signless Laplacian matrices of graphs have emerged as active areas of research \cite{CRC,Hessert1,Hessert2,Mallik,Ipsen}. In Section 4 we present a matrix tree theorem analog for the signless net Laplacian of a signed graph. In Section 5 we find a combinatorial formula for the determinant of the signless net Laplacian.

\section{Basic results}
In this section we present some basic results regarding net incidence and net Laplacian matrices similar to that of incidence and  Laplacian matrices.

\begin{proposition}
Let $D$, $D^{\pm}$, $A$, $M^{\pm}$, $L^{\pm}$, $Q^{\pm}$, and $Q$ be the degree, net degree, adjacency, net incidence, net Laplacian, signless net Laplacian, and signless Laplacian matrices of a simple signed graph $G$ respectively. Let $L_{|G|}$ and $Q_{|G|}$ be the Laplacian and signless Laplacian matrices  of  the underlying unsigned graph $|G|$ of the signed graph $G$ respectively. 
\begin{enumerate}
\item[(a)] $Q^{\pm}=D^{\pm}+A=M^{\pm}(M^{\pm})^T$.

\item[(b)] $L^{\pm}=D^{\pm}-A=N^{\pm}(N^{\pm})^T$ for any oriented net incidence matrix $N^{\pm}$ of $G$.

\item[(c)] $Q_{|G|}=D+|A|=M^{\pm}(M^{\pm})^*$.

\item[(d)] $L_{|G|}=D-|A|=N^{\pm}(N^{\pm})^*$ for any oriented net incidence matrix $N^{\pm}$ of $G$.
\end{enumerate}
\end{proposition}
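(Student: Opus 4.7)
The plan is to prove all four identities by the same entrywise computation: for each claimed product $XY^T$ (or $XY^*$), I would expand $(XY^T)_{ij} = \sum_\ell x_{i\ell}\,y_{j\ell}$ as a sum over edges, and use the fact that each column of $M^{\pm}$ (and hence of $N^{\pm}$) has exactly two nonzero entries, both equal to $1$ if the corresponding edge is positive and both equal to $\i$ if it is negative (up to a single sign change, for $N^{\pm}$).

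For part (a), I would split into the diagonal and off-diagonal cases. On the diagonal, the contribution from edge $e_\ell = \{i,k\}$ incident with $i$ is $m_{i\ell}^2$, which equals $1$ if $e_\ell$ is positive and $\i^2 = -1$ if negative; summing over edges incident with $i$ gives exactly the net degree $d^{\pm}(i)$. Off the diagonal, at most one edge $e_\ell = \{i,j\}$ contributes, and $m_{i\ell} m_{j\ell}$ equals $1$ or $-1$ according to the sign, which is precisely $A_{ij} = \sigma(\{i,j\})$. Hence $M^{\pm}(M^{\pm})^T = D^{\pm}+A$.

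Part (b) is the same calculation with one cosmetic change: passing from $M^{\pm}$ to $N^{\pm}$ negates exactly one of the two nonzero entries in each column, so the diagonal entries $n_{i\ell}^2$ are unchanged and still sum to $d^{\pm}(i)$, while each off-diagonal product $n_{i\ell} n_{j\ell}$ picks up an extra factor of $-1$, giving $-A_{ij}$. Thus $N^{\pm}(N^{\pm})^T = D^{\pm}-A$.

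For parts (c) and (d), I would repeat the same bookkeeping but with the conjugate transpose, observing that $|1|^2 = |\i|^2 = 1$ and $\i\,\overline{\i} = 1$. Then every edge contributes $+1$ to the appropriate diagonal sum (recovering the ordinary degree $d(i)$ rather than $d^{\pm}(i)$), and every off-diagonal product becomes $\pm 1$ independent of the edge's sign, namely $+1$ for $M^{\pm}(M^{\pm})^*$ (giving $|A|_{ij}$) and $-1$ for $N^{\pm}(N^{\pm})^*$ (giving $-|A|_{ij}$). None of the four parts presents a real obstacle; the only thing to be careful about is keeping straight which of $m^2$, $|m|^2$, and $m \cdot (-m)$ applies in each case, which is just the distinction between transpose and conjugate transpose combined with the sign flip built into $N^{\pm}$.
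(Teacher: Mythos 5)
Your proposal is correct and follows essentially the same route as the paper: an entrywise computation of the products, checking that the diagonal entries sum to the (net) degrees and the off-diagonal entries recover $\pm\sigma\{i,j\}$ or $\pm|a_{ij}|$ according to whether a transpose or conjugate transpose is taken and whether one entry per column has been negated. The paper's proof is merely a more compressed statement of the same calculation.
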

\begin{proof}
Suppose $G$ has $n$ vertices. To prove (a), suppose the rows of $M$ are $M_1,M_2,\ldots,M_n$. Then the $(i,j)$-entry of $M^{\pm}(M^{\pm})^T$ is $(M^{\pm}(M^{\pm})^T)_{ij}=M_iM_j^T$. This implies $(M^{\pm}(M^{\pm})^T)_{ii}=d^{\pm}(v_i)$ and for $i\neq j$,   $(M^{\pm}(M^{\pm})^T)_{ij}=0$ when $\{i,j\}$ is not an edge of $G$ and   $(M^{\pm}(M^{\pm})^T)_{ij}=\sigma\{i,j\}$ when $\{i,j\}$ is an edge of $G$. Thus $Q^{\pm}=D^{\pm}+A=M^{\pm}(M^{\pm})^T$.

The proof is similar for (b) by noting that $(N^{\pm}(N^{\pm})^T)_{ij}=-\sigma\{i,j\}$ when $\{i,j\}$ is an edge of $G$. The proofs for (c) and (d) are similar.
\end{proof}

\begin{proposition}
Let $G$ be a simple signed graph on $n$ vertices with the positive edge set $E^+$, negative edge set $E^-$, net Laplacian $L^{\pm}$, and signless net Laplacian $Q^{\pm}$. Then for all $x\in \mathbb R^n$, 
\[x^T L^{\pm}x=\displaystyle\sum_{\{u,v\}\in E^+}(x_u-x_v)^2-\sum_{\{i,j\}\in E^-}(x_i-x_j)^2\]
and
\[x^T Q^{\pm}x=\displaystyle\sum_{\{u,v\}\in E^+}(x_u+x_v)^2-\sum_{\{i,j\}\in E^-}(x_i+x_j)^2.\]
\end{proposition}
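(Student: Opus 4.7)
The plan is to exploit the factorizations established in the preceding proposition, namely $L^{\pm}=N^{\pm}(N^{\pm})^T$ and $Q^{\pm}=M^{\pm}(M^{\pm})^T$. Since these involve the ordinary transpose (not the conjugate transpose) and the matrices have complex entries, the factor of $\i^2=-1$ coming from negative edges will produce exactly the minus signs in the stated identities.

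For the $L^{\pm}$ identity, I would first write
\[
x^T L^{\pm} x \;=\; x^T N^{\pm}(N^{\pm})^T x \;=\; \bigl((N^{\pm})^T x\bigr)^T \bigl((N^{\pm})^T x\bigr) \;=\; \sum_{\ell=1}^{m} y_\ell^{\,2},
\]
where $y_\ell$ is the $\ell$-th entry of $(N^{\pm})^T x$. I would then compute $y_\ell$ edge by edge, using the construction of $N^{\pm}$: if $e_\ell=\{u,v\}\in E^+$ is oriented $(u,v)$, the column has entries $+1$ at $u$ and $-1$ at $v$, so $y_\ell = x_u - x_v$ and $y_\ell^{\,2} = (x_u-x_v)^2$; if $e_\ell=\{i,j\}\in E^-$ is oriented $(i,j)$, the column has entries $+\i$ at $i$ and $-\i$ at $j$, so $y_\ell = \i(x_i-x_j)$ and $y_\ell^{\,2} = -(x_i-x_j)^2$. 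Summing over edges yields the claimed expression.

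The argument for $Q^{\pm}$ is parallel, using $Q^{\pm}=M^{\pm}(M^{\pm})^T$. The only change is that both nonzero entries in each column of $M^{\pm}$ share the same sign (both $+1$ for positive edges, both $+\i$ for negative edges), so $((M^{\pm})^T x)_\ell$ becomes $x_u+x_v$ on positive edges and $\i(x_i+x_j)$ on negative edges; squaring and summing produces the second identity.

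There is no real obstacle here; the only point that requires care is remembering that $N^{\pm}$ and $M^{\pm}$ are complex, and that the factorization in the previous proposition uses the plain transpose $(\cdot)^T$ rather than the conjugate transpose $(\cdot)^*$. That distinction is precisely what makes the negative-edge contributions flip sign via $\i^2=-1$, and it is the mechanism through which the signed structure of $G$ is recorded in the quadratic form.
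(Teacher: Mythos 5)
Your proposal is correct and follows exactly the paper's argument: expand $x^T L^{\pm}x$ via the factorization $L^{\pm}=N^{\pm}(N^{\pm})^T$ as the (non-conjugated) sum of squares of the entries of $(N^{\pm})^T x$, identify each entry as $\pm(x_u-x_v)$ on positive edges and $\pm\i(x_i-x_j)$ on negative edges, and let $\i^2=-1$ produce the minus sign; the $Q^{\pm}$ case is handled the same way with $M^{\pm}$. No gaps.
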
	
	
\begin{proof}
Suppose $L^{\pm}=N^{\pm}(N^{\pm})^T$ for some oriented net incidence matrix $N^{\pm}$ of $G$. For any $x\in \mathbb R^n$, $(N^{\pm})^T x\in \mathbb C^m$. Each entry of $(N^{\pm})^T x$ corresponds to an edge of $G$ and has the form $\pm (x_u-x_v)$ if $\{u,v\}\in E^+$ and $\pm i(x_i-x_j)$ if $\{i,j\}\in E^-$. Then 
\begin{align*}
x^TLx 	&= x^TN^{\pm} (N^{\pm})^T x\\
		&= ((N^{\pm})^Tx)^T ((N^{\pm})^Tx)\\
		&=\displaystyle\sum_{\{u,v\}\in E^+}[\pm(x_u-x_v)]^2+\sum_{\{i,j\}\in E^-}[\pm i(x_i-x_j)]^2\\
		&=\displaystyle\sum_{\{u,v\}\in E^+}(x_u-x_v)^2-\sum_{\{i,j\}\in E^-}(x_i-x_j)^2.
\end{align*}

A similar proof can derive the result for $Q^{\pm}=M^{\pm}(M^{\pm})^T$ where $M^{\pm}$ is the net incidence matrix of $G$.
\end{proof}	

\begin{obs}\label{obs eigenvector}
If $x\in \mathbb R^n$ is an eigenvector corresponding to the eigenvalue $0$ of the net Laplacian matrix $L^{\pm}$ of a simple signed graph, then 
\[\displaystyle\sum_{\{u,v\}\in E^+}(x_u-x_v)^2=\sum_{\{i,j\}\in E^-}(x_i-x_j)^2.\]
If $0$ is an eigenvalue of the signless net Laplacian matrix $Q^{\pm}$ of a simple signed graph with an eigenvector $x\in \mathbb R^n$, then
\[\displaystyle\sum_{\{u,v\}\in E^+}(x_u+x_v)^2=\sum_{\{i,j\}\in E^-}(x_i+x_j)^2.\]
\end{obs}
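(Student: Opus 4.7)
The plan is to reduce this observation to the quadratic-form identities established in the preceding proposition by multiplying the eigenvalue equation on the left by $x^T$. Since the right-hand sides of the two identities we want to prove are precisely the two sides of the formulas for $x^T L^{\pm} x$ and $x^T Q^{\pm} x$ once the sign is rearranged, the eigenvalue equation at eigenvalue $0$ should collapse everything immediately.

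More concretely, suppose $x\in\mathbb{R}^n$ satisfies $L^{\pm}x = 0$. Then in particular $x^T L^{\pm} x = x^T \cdot 0 = 0$. Applying the first identity of the previous proposition gives
\[
0 \;=\; x^T L^{\pm} x \;=\; \sum_{\{u,v\}\in E^+}(x_u-x_v)^2 \;-\; \sum_{\{i,j\}\in E^-}(x_i-x_j)^2,
\]
and rearranging yields the claimed equality. The argument for $Q^{\pm}$ is identical after invoking the second identity of the proposition.

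There is really no obstacle here: once the quadratic-form expression is available, the observation is a one-line consequence of the fact that an eigenvector for the eigenvalue $0$ annihilates the associated quadratic form. The only thing worth being careful about is that the proposition is stated for $x\in\mathbb{R}^n$, which matches the hypothesis of the observation, so we may apply it directly without worrying about complex conjugation (even though $N^{\pm}$ and $M^{\pm}$ themselves have complex entries). Thus the proof will be essentially a two-line verification in each of the two cases.
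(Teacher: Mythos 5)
Your proof is correct and is exactly the argument the paper intends: the observation is stated immediately after the quadratic-form proposition precisely so that setting $x^T L^{\pm} x = 0$ (resp.\ $x^T Q^{\pm} x = 0$) for a $0$-eigenvector yields the claimed equalities. Nothing further is needed.
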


\begin{proposition}
Let $M^{\pm}$ and $N^{\pm}$ be the net incidence matrix and an oriented net incidence matrix of a simple signed graph $G$ respectively. Let $M_{|G|}$ and $N_{|G|}$ be the incidence matrix and an oriented incidence matrix of  the underlying unsigned graph $|G|$ of the signed graph $G$ respectively. Then 
\[\rank(M^{\pm})=\rank(M_{|G|}) \text{ and } \rank(N^{\pm})=\rank(N_{|G|}).\]
\end{proposition}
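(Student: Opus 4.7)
The plan is to exhibit $M^{\pm}$ and $N^{\pm}$ as the products of the corresponding (oriented) incidence matrices of $|G|$ with an invertible $m\times m$ diagonal matrix that records which edges are negative. Since right-multiplication by an invertible matrix preserves rank, both equalities will fall out immediately.

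To set this up, let $E^{+}$ and $E^{-}$ be the positive and negative edge sets of $G$ and define the diagonal matrix $D_\sigma = \diag(s_1,\ldots,s_m)$, where $s_\ell = 1$ if $e_\ell\in E^{+}$ and $s_\ell = \i$ if $e_\ell\in E^{-}$. Each $s_\ell$ is nonzero, so $D_\sigma$ is invertible. Now I compare the definitions of $M^{\pm}$ and $M_{|G|}$ column by column: both have exactly the same support in column $\ell$ (namely the two endpoints of $e_\ell$), with the nonzero entries of $M_{|G|}$ both equal to $1$ and the nonzero entries of $M^{\pm}$ both equal to $s_\ell$. Hence column $\ell$ of $M^{\pm}$ is $s_\ell$ times column $\ell$ of $M_{|G|}$, i.e. $M^{\pm} = M_{|G|}\, D_\sigma$. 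Because $D_\sigma$ is invertible, $\rank(M^{\pm}) = \rank(M_{|G|})$.

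For the oriented case I would first pick a specific oriented incidence matrix $N_{|G|}$ of $|G|$ that uses the orientation induced by $N^{\pm}$ (as described after the definition of $N^{\pm}$, via the ordering $m_{i\ell} > m_{j\ell}$ on the real or imaginary axis). With that choice, the same column-by-column comparison shows $N^{\pm} = N_{|G|}\, D_\sigma$, and the invertibility of $D_\sigma$ gives $\rank(N^{\pm}) = \rank(N_{|G|})$ for this particular $N_{|G|}$. To conclude for \emph{any} oriented incidence matrix of $|G|$, I would note the standard fact that reversing the orientation of an edge multiplies the corresponding column by $-1$, which does not change the rank; so all oriented incidence matrices of $|G|$ share a common rank, and the equality $\rank(N^{\pm}) = \rank(N_{|G|})$ holds in general.

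The only point that needs care is aligning the two orientations in the oriented case; the rest is a direct structural observation, so I do not anticipate a genuine obstacle.
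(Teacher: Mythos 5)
Your proof is correct, but it takes a different route from the paper's. The paper argues via the left null spaces: it observes that the left null spaces of $M^{\pm}$ and $M_{|G|}$ coincide (and likewise for $N^{\pm}$ and $N_{|G|}$), and then the rank equalities follow from rank--nullity applied to the rows. You instead exhibit the explicit factorization $M^{\pm}=M_{|G|}\,D_\sigma$ with $D_\sigma=\diag(s_1,\ldots,s_m)$ invertible, and conclude that right-multiplication by an invertible matrix preserves rank. Both arguments ultimately rest on the same structural fact --- each column of the net incidence matrix is a nonzero scalar multiple ($1$ or $\i$) of the corresponding column of the unsigned incidence matrix --- but your packaging is more explicit and self-contained; indeed, the paper itself records this factorization only in a closing remark attributed to the reviewer. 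Your extra care in the oriented case (first aligning the orientation of $N_{|G|}$ with the one induced by $N^{\pm}$, then noting that reversing an edge's orientation only scales a column by $-1$) is exactly the right bookkeeping, and it handles the quantifier ``an oriented incidence matrix'' that the paper's one-line proof glosses over.
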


\begin{proof}
The proof follows from that fact that 
the left null spaces of $M^{\pm}$ and $M_{|G|}$ are the same and the left null spaces of $N^{\pm}$ and $N_{|G|}$ are the same.
\end{proof}

\begin{obs} Let $N^{\pm}$, $M^{\pm}$, $L^{\pm}$, $Q^{\pm}$, and $Q$ be  an oriented net incidence, the net incidence, net Laplacian, signless net Laplacian, and signless Laplacian matrices of a simple signed graph $G$ respectively. Let $Q_{|G|}$ be the signless Laplacian matrix  of  the underlying unsigned graph $|G|$ of the signed graph $G$.
\begin{enumerate}
\item[(a)] $\rank(M^{\pm})= \rank(M^{\pm}(M^{\pm})^*)=\rank(Q_{|G|})$.

\item[(b)] $\rank(M^{\pm})\geq \rank(M^{\pm}(M^{\pm})^T)=\rank(Q^{\pm})$.

\item[(c)]
$\rank(N^{\pm})\geq \rank(N^{\pm}(N^{\pm})^T)=\rank(L^{\pm})$.
\end{enumerate}
\end{obs}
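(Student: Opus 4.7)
\medskip

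\noindent\textbf{Proof plan.} All three parts reduce to combining Proposition~2.1 with two standard linear-algebra rank identities, so the plan is essentially bookkeeping.

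First I would record the two facts to be used. (i) For any complex matrix $A$, $\rank(A)=\rank(AA^*)$; indeed, $AA^*x=0$ implies $x^*AA^*x=\|A^*x\|^2=0$, hence $A^*x=0$, so $\ker(AA^*)=\ker(A^*)$ and therefore $\rank(AA^*)=\rank(A^*)=\rank(A)$ by rank-nullity. (ii) For compatible complex matrices $A,B$, $\rank(AB)\le \min(\rank A,\rank B)$, which is standard.

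With (i) and (ii) in hand the three parts are immediate. Part (a): apply (i) to $A=M^{\pm}$ and use $M^{\pm}(M^{\pm})^*=Q_{|G|}$ from Proposition~2.1(c). Part (b): apply (ii) to obtain $\rank(M^{\pm}(M^{\pm})^T)\le \rank(M^{\pm})$, and identify $M^{\pm}(M^{\pm})^T=Q^{\pm}$ via Proposition~2.1(a). Part (c) is completely parallel, using Proposition~2.1(b) to identify $N^{\pm}(N^{\pm})^T=L^{\pm}$.

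The only point that deserves emphasis, rather than being a genuine obstacle, is why (b) and (c) are stated as inequalities while (a) is an equality. The argument for (i) hinges on $x^*AA^*x=\|A^*x\|^2$, which breaks once $*$ is replaced by $T$ and $A$ has complex entries: for example, $[\,1\;\i\,][\,1\;\i\,]^T=0$ even though $[\,1\;\i\,]$ has rank one. Since $M^{\pm}$ and $N^{\pm}$ take values in $\{0,\pm 1,\pm\i\}$, the rank can genuinely drop upon forming $AA^T$, so writing (b) and (c) as equalities would be incorrect. Apart from this subtlety there is nothing more to the argument.
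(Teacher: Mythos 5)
Your proposal is correct and is exactly the justification the paper leaves implicit: the observation is stated without proof, and the intended argument is precisely the identification of $M^{\pm}(M^{\pm})^*$, $M^{\pm}(M^{\pm})^T$, and $N^{\pm}(N^{\pm})^T$ via Proposition 2.1 combined with the standard facts $\rank(AA^*)=\rank(A)$ and $\rank(AB)\leq\min(\rank A,\rank B)$. Your remark on why the Hermitian-transpose argument fails for the ordinary transpose over $\mathbb{C}$ (e.g.\ $[\,1\;\i\,][\,1\;\i\,]^T=0$) correctly explains why (b) and (c) are only inequalities.
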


\begin{lemma}\label{det of inc of a cycle}
Suppose $G$ is a signed cycle on $n$ vertices. Then  the determinant of an oriented net incidence matrix of $G$ is $0$. Also the determinant of the net incidence matrix of $G$ is $\pm 2 \i^{e^-}$ if $n$ is odd and zero otherwise, where $\mathrm e^-$ is the number of negative edges of $G$.
\end{lemma}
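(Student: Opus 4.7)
The plan is to treat the two claims separately. For the oriented net incidence matrix $N^{\pm}$, every column has exactly two nonzero entries that are negatives of each other: a $+1$ and a $-1$ when the edge is positive, or a $+\i$ and a $-\i$ when the edge is negative. Hence the entries in each column sum to zero, which says precisely that $\mathbf{1}^T N^{\pm} = 0$. Thus the all-ones vector is a nontrivial element of the left null space of $N^{\pm}$, forcing $\det(N^{\pm}) = 0$.

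For the (unoriented) net incidence matrix $M^{\pm}$, the first step is to pull the signs out of the determinant. Each column $\ell$ of $M^{\pm}$ has both of its nonzero entries equal to a common value $\alpha_\ell$, with $\alpha_\ell = 1$ if $e_\ell$ is positive and $\alpha_\ell = \i$ if $e_\ell$ is negative. By multilinearity in the columns,
$$\det(M^{\pm}) \;=\; \Bigl(\prod_{\ell=1}^{n}\alpha_\ell\Bigr)\det(M_{|G|}) \;=\; \i^{e^-}\det(M_{|G|}),$$
where $M_{|G|}$ is the unoriented incidence matrix of the cycle $|G|$. The problem thus reduces to computing $\det(M_{|G|})$.

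To do this I would fix the cyclic labeling $e_\ell = \{v_\ell, v_{\ell+1}\}$ (indices modulo $n$), so that row $v_1$ of $M_{|G|}$ has $1$'s only in columns $e_1$ and $e_n$. Cofactor-expand along this row: the minor obtained by deleting column $e_1$ is lower triangular with $1$'s on the diagonal, and the minor obtained by deleting column $e_n$ is upper triangular with $1$'s on the diagonal, so each equals $1$. The corresponding cofactor signs are $(-1)^{1+1}=1$ and $(-1)^{1+n}$, giving
$$\det(M_{|G|}) \;=\; 1 + (-1)^{n+1},$$
which is $2$ when $n$ is odd and $0$ when $n$ is even. Combining with the $\i^{e^-}$ factor yields $\pm 2\i^{e^-}$ for odd $n$ and $0$ for even $n$, the $\pm$ absorbing the sign changes caused by alternative orderings of the rows or columns. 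There is no real obstacle; the only bit of care needed is choosing a cyclic labeling that makes the two triangular minors visible, after which the computation is routine.
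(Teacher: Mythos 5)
Your proof is correct, and it diverges from the paper's in one useful way. For the oriented matrix $N^{\pm}$ the paper sets up the same cyclic canonical form $PN_*^{\pm}Q$ it uses for $M^{\pm}$ and grinds through a cofactor expansion, obtaining $a_1\cdots a_n[1+(-1)^{2n+1}]=0$; your observation that every column of $N^{\pm}$ sums to zero, so $\mathbf{1}^T N^{\pm}=0$ and the square matrix is singular, gets the same conclusion with no computation and in fact works for any connected signed graph with as many edges as vertices, not just a cycle. For $M^{\pm}$ the two arguments are essentially the same cofactor expansion along the row of a fixed vertex against the two triangular minors; your extra step of pulling out $\prod_\ell\alpha_\ell=\i^{e^-}$ by column multilinearity, reducing to the unsigned incidence matrix $M_{|G|}$, is a clean way to separate the sign bookkeeping from the combinatorics, whereas the paper carries the $a_\ell\in\{1,\i\}$ through the expansion and collects them at the end. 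Both handle relabelings the same way, by conjugating with permutation matrices (the paper) or absorbing signs into the $\pm$ (you). No gaps.
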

\begin{proof}
Suppose $M^{\pm}$ is the net  incidence matrix of  $G$. Then 
	$$M^{\pm}=PM_*^{\pm}Q=P
	\left[\begin{array}{cccccc}
	a_1&0&0&\cdots&0&a_n\\
	a_1&a_2&0&\cdots&0&0\\
	0&a_2&a_3&\ddots&\vdots&\vdots\\
	\vdots&\ddots &\ddots&\ddots&\ddots&\vdots\\
	\vdots&\vdots&\ddots&\ddots&a_{n-1}&0\\
	0&\cdots&\cdots&0&a_{n-1}&a_n\\
	\end{array}\right]Q,
	$$
for some permutation matrices $P$ and $Q$ and for some $a_1,a_2,\ldots,a_n \in \{1,\i\}$. By a cofactor expansion across the first row, 
\begin{align*}
\det(M^{\pm}) &=\det(P)\det(M_*^{\pm})\det(Q)\\
&=(\pm 1)[a_1a_2 \cdots a_n + (-1)^{n+1} a_na_1\cdot a_2 \cdots a_{n-1}](\pm 1)\\
&=(\pm 1)a_1a_2 \cdots a_n [1+(-1)^{n+1}](\pm 1)\\
&=\pm \i^{e^-}[1+(-1)^{n+1}].   
\end{align*}

Then $\det(M^{\pm})=\pm 2 \i^{e^-}$ if $n$ is odd and zero otherwise.

A similar proof can derive the result for any oriented net incidence matrix $N^{\pm}$:
$$N^{\pm}=PN_*^{\pm}Q=P
	\left[\begin{array}{cccccc}
	a_1&0&0&\cdots&0&-a_n\\
	-a_1&a_2&0&\cdots&0&0\\
	0&-a_2&a_3&\ddots&\vdots&\vdots\\
	\vdots&\ddots &\ddots&\ddots&\ddots&\vdots\\
	\vdots&\vdots&\ddots&\ddots&a_{n-1}&0\\
	0&\cdots&\cdots&0&-a_{n-1}&a_n\\
	\end{array}\right]Q,
	$$
for some permutation matrices $P$ and $Q$ and for some $a_1,a_2,\ldots,a_n \in \{1,\i\}$. By a cofactor expansion across the first row, 
\begin{align*}
\det(N^{\pm}) &=\det(P)\det(N_*^{\pm})\det(Q)\\
&=(\pm 1)[a_1a_2 \cdots a_n + (-1)^{n+1}(-a_n)(-a_1)(-a_2) \cdots (-a_{n-1})](\pm 1)\\
&=(\pm 1)a_1a_2 \cdots a_n [1+(-1)^{2n+1}](\pm 1)\\
&=0.
\end{align*}
\end{proof}

\begin{lemma}\label{incidencedet}
Suppose $G$ is a signed unicyclic graph on $n$ vertices with a cycle $C_k$, $3\leq k\leq n$. Then  the determinant of an oriented net incidence matrix $N^{\pm}$ of $G$ is $0$. Also the determinant of the net incidence matrix $M^{\pm}$ of $G$ is $\pm 2\i^{e^-}$ if $k$ is odd and zero otherwise, where $\mathrm e^-$ is the number of negative edges of $G$.
\end{lemma}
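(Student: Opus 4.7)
My plan is to prove the lemma by induction on $n-k$, the number of tree edges attached to the unique cycle.

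Since a unicyclic graph on $n$ vertices has exactly $n$ edges, the matrices $M^{\pm}$ and $N^{\pm}$ are both $n \times n$, so their determinants make sense. For the base case $n-k = 0$, the graph $G$ is the signed cycle $C_k$ itself, and the claim is exactly Lemma \ref{det of inc of a cycle}.

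For the inductive step, assume $n > k$. Since $G$ is unicyclic but has more vertices than the cycle, $G$ must contain at least one pendant vertex $v$ off the cycle; let $e = \{u,v\}$ be the unique edge incident with $v$. Then in both $M^{\pm}$ and $N^{\pm}$, the row indexed by $v$ has a single nonzero entry, namely the $(v,e)$-entry. Expanding the determinant along this row, I get
\[
\det(M^{\pm}) = \pm\, m_{ve}\cdot \det(M^{\pm}_{G-v}), \qquad \det(N^{\pm}) = \pm\, n_{ve}\cdot \det(N^{\pm}_{G-v}),
\]
where $G-v$ denotes the graph obtained by deleting $v$ and the edge $e$. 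Crucially, $G-v$ is again a signed unicyclic graph on $n-1$ vertices whose unique cycle is still $C_k$, with $e^- - [e \in E^-]$ negative edges, where $[\cdot]$ is an indicator. By the inductive hypothesis, $\det(N^{\pm}_{G-v}) = 0$, so $\det(N^{\pm}) = 0$, finishing that part.

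For $M^{\pm}$, the inductive hypothesis gives $\det(M^{\pm}_{G-v}) = \pm 2\i^{\,e^- - [e \in E^-]}$ when $k$ is odd and $0$ when $k$ is even. The small calculation to finish is to check that $m_{ve}\cdot \i^{\,e^- - [e \in E^-]} = \i^{e^-}$: by definition of $M^{\pm}$, the entry $m_{ve}$ equals $1$ when $e$ is positive (in which case $[e \in E^-] = 0$) and equals $\i$ when $e$ is negative (in which case $[e \in E^-] = 1$), so in either case the product is $\i^{e^-}$. Hence $\det(M^{\pm}) = \pm 2\i^{e^-}$ when $k$ is odd and $0$ when $k$ is even.

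The only mildly delicate point is ensuring a pendant vertex exists whenever $n>k$, which is immediate because any unicyclic graph other than a cycle has a leaf. Everything else reduces to a clean cofactor expansion and bookkeeping of the imaginary units, so I expect no real obstacle beyond matching signs carefully.
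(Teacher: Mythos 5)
Your proof is correct and follows essentially the same route as the paper: both reduce to the signed-cycle case (Lemma \ref{det of inc of a cycle}) by cofactor expansion along the rows of pendant vertices, with your version merely packaging the successive expansions as a formal induction on $n-k$ and tracking the powers of $\i$ explicitly.
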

\begin{proof}
The proof follows from the preceding lemma when $G$ is a signed cycle. Now suppose $G$ is not a signed cycle, i.e., $G$ has pendant vertices. By successive
cofactor expansions along rows corresponding to the pendant vertices, the determinant becomes $\pm \i^{e_{C_k}^-}$ times the determinant of the signed cycle, where $\mathrm e_{C_k}^-$ is the number of negative edges in $C_k$. The rest follows from the preceding Lemma.
\end{proof}

\section{The number of both positive and negative spanning trees in a signed graph}
The matrix tree theorem can be proved using the Cauchy-Binet formula:
	\begin{theorem}[Cauchy-Binet]\label{CB}
		Let $m \leq n$. For $m\times n$ matrices $A$ and $B$, 
		\[
			\det(AB^T)=\sum_{S} \det(A(;S]) \det(B(;S]),
		\]
		where the summation runs over $\binom{n}{m}$ $m$-subsets $S$ of $\{1,2,\ldots,n\}$.
	\end{theorem}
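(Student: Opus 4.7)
The plan is to prove Cauchy-Binet by direct expansion, exploiting the multilinearity and alternating property of the determinant in the columns. The starting observation is that each column of $AB^T$ is a linear combination of the columns of $A$; specifically, if $A_{\cdot k}$ denotes the $k$-th column of $A$, then
\[
(AB^T)_{\cdot j} \;=\; \sum_{k=1}^{n} B_{jk}\, A_{\cdot k}.
\]

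Applying multilinearity separately to each of the $m$ columns of $AB^T$ gives
\[
\det(AB^T) \;=\; \sum_{k_1,\dots,k_m=1}^{n} \Bigl(\prod_{j=1}^{m} B_{j\,k_j}\Bigr)\, \det\bigl[\,A_{\cdot k_1}\mid\cdots\mid A_{\cdot k_m}\,\bigr].
\]
Any tuple $(k_1,\dots,k_m)$ with a repeated index contributes zero because the corresponding matrix then has two equal columns, so only strictly distinct tuples survive. I would then group these tuples according to the underlying $m$-subset $S=\{s_1<\cdots<s_m\}\subseteq\{1,\dots,n\}$: each $S$ produces one summand for every permutation $\sigma\in S_m$ via $k_j=s_{\sigma(j)}$.

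The final step is to recombine the orderings: pulling $\operatorname{sgn}(\sigma)$ out of the column-permuted minor rewrites the $A$-determinant as $\det(A[:,S])$, while the remaining factor $\sum_{\sigma\in S_m}\operatorname{sgn}(\sigma)\prod_{j=1}^{m} B_{j,\,s_{\sigma(j)}}$ is precisely the Leibniz expansion of $\det(B[:,S])$. Collecting the pieces yields $\det(AB^T)=\sum_{S}\det(A[:,S])\,\det(B[:,S])$, as required. The only delicate point is the sign bookkeeping when reordering the columns of the $A$-minor into standard index order, but this is exactly what reconstructs the second determinant factor; once the bookkeeping is organized, no further work is needed. (An equivalent route, which I would keep in reserve, is to compute the determinant of the $(m+n)\times(m+n)$ block matrix $\left(\begin{smallmatrix} I_n & B^T \\ -A & 0\end{smallmatrix}\right)$ two ways, by block row reduction and by generalized Laplace expansion along the bottom $m$ rows; this avoids the multilinearity step at the cost of a longer sign computation.)
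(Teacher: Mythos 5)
Your argument is correct: the column expansion $(AB^T)_{\cdot j}=\sum_k B_{jk}A_{\cdot k}$, multilinearity, the vanishing of repeated-index tuples, and the regrouping of the surviving tuples by their underlying $m$-set with the signs reassembling into the Leibniz expansion of $\det(B(;S])$ is the standard and complete proof of Cauchy--Binet. Note, however, that the paper does not prove this statement at all --- it is quoted as a known classical theorem and used as a black box to derive the matrix tree analogues --- so there is no proof in the paper to compare against; your proof (and the block-matrix alternative you mention in reserve) would both be acceptable ways to make the paper self-contained on this point.
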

	
In the preceding theorem, 	$A(;S]$ is the submatrix of $A$ with  the columns indexed by $S\subseteq \{1,2,\ldots,n\}$ and all the rows of $A$ (i.e., no rows deleted). In the next observation and the rest of this article, $L^{\pm}(i)$ is the submatrix of $L^{\pm}$ with row $i$ and column $i$ deleted, $N^{\pm}(i;)$ is the submatrix of $N^{\pm}$ with row $i$ deleted, and $N^{\pm}(i;S]$ is the submatrix of $N^{\pm}$ with row $i$ deleted keeping the columns indexed by $S\subseteq \{1,2,\ldots,m\}$.
	
\begin{obs}\label{CB3}
Let $G$ be a singed graph on $n\geq 2$ vertices with $m$ edges and $m \geq n-1$. Suppose $L^{\pm}$ is the net  Laplacian matrix  and  $N^{\pm}$ is an oriented net incidence matrix of $G$. Then
\begin{enumerate}
\item[(a)] $L^{\pm}(i)=N^{\pm}(i;)N^{\pm}(i;)^T$, $i=1,2,\ldots,n$, and 

\item[(b)] $\det(L^{\pm}(i))=\det(N^{\pm}(i;)N^{\pm}(i;)^T)=\sum_{S} \det(N^{\pm}(i;S])^2,$
where the summation runs over all $(n-1)$-subsets $S$ of $\{1,2,\ldots,m\}$ (by Theorem \ref{CB}).
\end{enumerate}
\end{obs}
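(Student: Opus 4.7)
The statement is essentially a direct corollary of Proposition 2.1(b) combined with the Cauchy-Binet formula, so I expect the proof to be short and formal rather than conceptually hard.

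For part (a), the plan is to start from the decomposition $L^{\pm} = N^{\pm}(N^{\pm})^T$ established in Proposition 2.1(b), and then observe that deleting row $i$ and column $i$ from a matrix product of the form $XX^T$ is the same as deleting row $i$ from $X$ alone. Concretely, the $(j,k)$-entry of $L^{\pm}$ is the inner product of rows $j$ and $k$ of $N^{\pm}$; the minor $L^{\pm}(i)$ is the Gram-type matrix whose $(j,k)$-entry (for $j,k\neq i$) is the same inner product. Equivalently, deleting column $i$ from $(N^{\pm})^T$ is the same operation as deleting row $i$ from $N^{\pm}$ and then transposing. Hence
\[
L^{\pm}(i) \;=\; N^{\pm}(i;)\,\bigl(N^{\pm}(i;)\bigr)^T.
\]

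For part (b), the first equality is immediate from (a) by taking determinants of both sides. For the second equality, I will apply the Cauchy-Binet formula (Theorem \ref{CB}) to the two $(n-1)\times m$ matrices $A=B=N^{\pm}(i;)$, which is legal because the hypothesis $m \geq n-1$ gives exactly the dimension inequality required by Cauchy-Binet. Since $A=B$, every term $\det(A(;S])\det(B(;S])$ collapses to $\det(N^{\pm}(i;S])^2$, and summing over the $(n-1)$-subsets $S \subseteq \{1,\ldots,m\}$ yields the claimed formula.

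The only point that merits a note, rather than a genuine obstacle, is that entries of $N^{\pm}$ are in general complex (they can equal $\pm\i$), so $\det(N^{\pm}(i;S])^2$ is an ordinary complex square rather than a modulus-squared. This is consistent because Cauchy-Binet holds over any commutative ring and pairs $\det(A(;S])$ with $\det(B(;S])$ without conjugation. I would flag this in one sentence so the reader is not tempted to read the $(\,\cdot\,)^2$ as $|\,\cdot\,|^2$; the interpretation of these squared determinants in terms of positive and negative spanning trees is then the content of the next section.
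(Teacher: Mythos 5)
Your proposal is correct and matches the paper's (implicit) justification exactly: the paper states this as an Observation with no separate proof, relying on precisely the two ingredients you use, namely $L^{\pm}=N^{\pm}(N^{\pm})^T$ from Proposition 2.1(b) restricted to rows/columns other than $i$, and the Cauchy--Binet formula applied with $A=B=N^{\pm}(i;)$. Your added remark that $\det(N^{\pm}(i;S])^2$ is an ordinary complex square rather than a modulus-squared is a worthwhile clarification that the paper leaves tacit but uses essentially in Theorem \ref{mainresult}.
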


 A subgraph $H$ of a signed graph $G$ is {\it positive} if $H$ has an even number of negative edges (i.e., the product of the signs of the edges of $H$ is positive). Similarly $H$ is {\it negative} if $H$ has an odd number of negative edges (i.e., the product of the signs of the edges of $H$ is negative).

\begin{lemma}\label{tree_det}
Let $T$ be a signed tree with at least one edge and $N^{\pm}$ be an oriented net incidence matrix of $T$. Then for all vertices $j$ of $T$,
$$\det(N^{\pm}(j;)) = \begin{cases} 
\pm 1 &\text{if } T \text{ is positive}\\
\pm \i &\text{if } T \text{ is negative.}\end{cases}$$
\end{lemma}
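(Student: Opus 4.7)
The plan is to argue by induction on the number of vertices $n$ of $T$, using the fact that a tree always has pendant vertices so that cofactor expansion collapses the problem to a smaller tree.

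For the base case $n=2$, $T$ is a single edge $e$, so $N^{\pm}$ is a $2\times 1$ matrix whose two nonzero entries are $\pm 1$ if $e$ is positive and $\pm \i$ if $e$ is negative. Deleting either row leaves a $1\times 1$ matrix whose determinant is $\pm 1$ or $\pm \i$ according to the sign of the unique edge, which is the sign of $T$.

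For the inductive step $n\geq 3$, I would invoke the elementary fact that every tree on at least two vertices has at least two leaves, so I can choose a leaf $v\neq j$. Let $e$ be its unique incident edge and $\alpha \in \{\pm 1,\pm \i\}$ the corresponding nonzero entry in row $v$ of $N^{\pm}$. Since row $v$ of $N^{\pm}(j;)$ contains $\alpha$ as its only nonzero entry, expanding the determinant along this row yields
\[
\det\bigl(N^{\pm}(j;)\bigr) \;=\; \pm\,\alpha \cdot \det\bigl(N^{\pm}_{T-v}(j;)\bigr),
\]
where $T-v$ is the tree on $n-1$ vertices obtained by removing $v$ together with $e$, and the submatrix on the right really is an oriented net incidence matrix of $T-v$ with row $j$ deleted, because removing $v$ and $e$ simply strikes out the corresponding row and column of $N^{\pm}$ without changing any remaining entry. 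Since $T-v$ still contains $j$ and has at least one edge, the induction hypothesis applies and yields $\det(N^{\pm}_{T-v}(j;))=\pm 1$ if $T-v$ is positive and $\pm \i$ if $T-v$ is negative.

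The final step, which I expect to be the only real place one can trip up, is the parity bookkeeping between $T$ and $T-v$. If $e$ is positive, then $\alpha=\pm 1$ and $T$ has the same sign as $T-v$, so multiplying the inductive value by a real unit preserves whether the answer is real ($\pm 1$) or imaginary ($\pm \i$). If $e$ is negative, then $\alpha=\pm \i$ and $T$ has the opposite sign from $T-v$; using $\i^2=-1$, multiplication by $\pm\i$ interchanges the real and imaginary cases, which is exactly the flip required. Tracing through these four sub-cases closes the induction. No new ideas beyond leaf expansion and a careful accounting of the factor $\alpha$ are needed, so the argument should be short and self-contained.
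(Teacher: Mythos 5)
Your proof is correct and follows essentially the same route as the paper: induction on the number of vertices, removal of a pendant vertex, and cofactor expansion along the row of that vertex, with the same four-way parity bookkeeping between the sign of the deleted edge and the sign of the subtree. The one small improvement is your choice of a leaf $v\neq j$ (possible since every tree on at least two vertices has at least two leaves), which lets you avoid the separate case $j=v$ that the paper handles by an extra cofactor expansion along the column of the pendant edge.
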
	

\begin{proof}
We prove the statement by induction on $n$, the number of vertices of $T$. For $n = 2$,  $T$ is $P_2$ and  then $\det(N^{\pm}(j;))=\pm 1$ when $T$ is positive and $\det(N^{\pm}(j;))=\pm \i$ when $T$ is negative. Assume the statement holds for some $n \geq 2$. Let $T$ be a tree with $n+1$ vertices. Suppose $v$ is a pendant vertex of $T$ incident with the unique edge $e_{\ell} = \{ v,k \}$ of $T$. \\ 

Case 1. $T$ is a negative tree\\
Subcase (a). $e_{\ell} = \{ v,k \}$ is a negative edge\\
In this case, $T(v)$ is a positive tree. Then by the induction hypothesis, $\det(N^{\pm}(v,j;\ell))=\pm 1$ for any $j\neq v$. The $v$th row of $N^{\pm}$ has only one nonzero entry which is the $(v,\ell)$th entry and it is equal to $\pm \i$. To find $\det(N^{\pm}(j;))$, $j\neq v$, take a cofactor expansion across the $v$th row and get 
\[
\det(N^{\pm}(j;))=\pm \i \cdot \big( \pm \det(N(v,j ; \ell)) \big)=\pm \i(\pm 1)=\pm \i.
\]

Note that the $\ell$th column of $N^{\pm}(v;)$ has only one nonzero entry which is the $(k,\ell)$th entry and it is equal to $\pm \i$. To find $\det(N^{\pm}(v;))$, take a cofactor expansion across the $\ell$th column and get 
\[
\det(N^{\pm}(v;))=\pm \i \cdot \big( \pm \det(N(v,k ; \ell)) \big)=\pm \i(\pm 1)=\pm \i.
\]
 
Subcase (b). $e_l = \{ v,k \}$ is a positive edge\\
In this case, $T(v)$ is a negative tree. Then by the induction hypothesis, $\det(N^{\pm}(v,j;\ell))=\pm \i$ for any $j\neq v$. The $v$th row of $N^{\pm}$ has only one nonzero entry which is the $(v,\ell)$th entry and it is equal to $\pm 1$. To find $\det(N^{\pm}(j;))$, $j\neq v$, take a cofactor expansion across the $v$th row and get 
\[
\det(N^{\pm}(j;))=\pm 1 \cdot \big( \pm \det(N(v,j ; \ell)) \big)=\pm 1 (\pm \i)=\pm \i.
\]

Note that the $\ell$th column of $N^{\pm}(v;)$ has only one nonzero entry which is the $(k,\ell)$th entry and it is equal to $\pm 1$. To find $\det(N^{\pm}(v;))$, take a cofactor expansion across the $\ell$th column and get 
\[
\det(N^{\pm}(v;))=\pm 1 \cdot \big( \pm \det(N(v,k ; \ell)) \big)=\pm 1(\pm \i)=\pm \i.
\]

Case 2. $T$ is a positive tree\\
Subcase (a). $e_{\ell} = \{ v,k \}$ is a negative edge\\
In this case, $T(v)$ is a negative tree. Then by the induction hypothesis, $\det(N^{\pm}(v,j;\ell))=\pm \i$ for any $j\neq v$. The $v$th row of $N^{\pm}$ has only one nonzero entry which is the $(v,\ell)$th entry and it is equal to $\pm \i$. To find $\det(N^{\pm}(j;))$, $j\neq v$, take a cofactor expansion across the $v$th row and get 
\[
\det(N^{\pm}(j;))=\pm \i \cdot \big( \pm \det(N(v,j ; \ell)) \big)=\pm \i(\pm \i)=\pm 1.
\]

Note that the $\ell$th column of $N^{\pm}(v;)$ has only one nonzero entry which is the $(k,\ell)$th entry and it is equal to $\pm \i$. To find $\det(N^{\pm}(v;))$, take a cofactor expansion across the $\ell$th column and get 
\[
\det(N^{\pm}(v;))=\pm \i \cdot \big( \pm \det(N(v,k ; \ell)) \big)=\pm \i(\pm \i)=\pm 1.
\]

Subcase (b). $e_l = \{ v,k \}$ is a positive edge\\
In this case, $T(v)$ is a positive tree. Then by the induction hypothesis, $\det(N^{\pm}(v,j;\ell))=\pm 1$ for any $j\neq v$. The $v$th row of $N^{\pm}$ has only one nonzero entry which is the $(v,\ell)$th entry and it is equal to $\pm 1$. To find $\det(N^{\pm}(j;))$, $j\neq v$, take a cofactor expansion across the $v$th row and get 
\[
\det(N^{\pm}(j;))=\pm 1 \cdot \big( \pm \det(N(v,j ; \ell)) \big)=\pm 1(\pm 1)=\pm 1.
\]

Note that the $\ell$th column of $N^{\pm}(v;)$ has only one nonzero entry which is the $(k,\ell)$th entry and it is equal to $\pm 1$. To find $\det(N^{\pm}(v;))$, take a cofactor expansion across the $\ell$th column and get 
\[
\det(N^{\pm}(v;))=\pm 1 \cdot \big( \pm \det(N(v,k ; \ell)) \big)=\pm 1(\pm 1)=\pm 1.
\]
\end{proof}

\begin{lemma}\label{tree_det M(j;)}
Let $T$ be a signed tree with at least one edge and $M^{\pm}$ be the net incidence matrix of $T$. Then for all vertices $j$ of $T$,
$$\det(M^{\pm}(j;)) = \begin{cases} 
\pm 1 &\text{if } T \text{ is positive}\\
\pm \i &\text{if } T \text{ is negative.}\end{cases}$$
\end{lemma}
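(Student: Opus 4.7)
The plan is to mirror the inductive proof of Lemma \ref{tree_det} verbatim, since the only difference between $M^{\pm}$ and $N^{\pm}$ is that the two nonzero entries of a column of $M^{\pm}$ are equal rather than opposite, and the induction at each step only uses a row or column with a single nonzero entry, for which this sign agreement is irrelevant. I would induct on $n$, the number of vertices of $T$.

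For the base case $n=2$, the tree $T=P_2$ has a single edge $e_1$, and for either choice of $j$ the matrix $M^{\pm}(j;)$ is a $1\times 1$ matrix whose entry is $1$ if $e_1$ is positive and $\i$ if $e_1$ is negative, giving the claimed values directly.

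For the inductive step, assume the result for trees on $n$ vertices and let $T$ have $n+1$ vertices. Pick a pendant vertex $v$ with unique incident edge $e_\ell=\{v,k\}$, and let $T(v)$ denote the tree obtained by deleting $v$. Split into two cases. If $j\neq v$, then row $v$ of $M^{\pm}(j;)$ has its only nonzero entry in column $\ell$, equal to $1$ when $e_\ell$ is positive and $\i$ when $e_\ell$ is negative, and cofactor expansion along that row yields $\det(M^{\pm}(j;)) = \pm a \cdot \det(M^{\pm}(v,j;\ell))$, where the inner minor is the corresponding principal-type minor for the tree $T(v)$. If $j=v$, then after deleting row $v$ the column $\ell$ of $M^{\pm}(v;)$ contains only the entry in row $k$ (again $1$ or $\i$ depending on the sign of $e_\ell$), and cofactor expansion along column $\ell$ gives $\det(M^{\pm}(v;)) = \pm a \cdot \det(M^{\pm}(v,k;\ell))$. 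In both subcases the remaining minor is $\det(M^{\pm}_{T(v)}(w;))$ for some vertex $w$ of $T(v)$, to which the induction hypothesis applies.

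To finish, one simply tracks the power of $\i$: deleting a positive pendant edge preserves the sign of the tree, while deleting a negative pendant edge flips it. Thus if $e_\ell$ is positive then $a=1$ and $T(v)$ has the same sign as $T$, so the product is $\pm 1$ if $T$ is positive and $\pm\i$ if $T$ is negative; if $e_\ell$ is negative then $a=\i$ and $T(v)$ has the opposite sign of $T$, and using $\i\cdot\i=-1$ and $\i\cdot 1=\i$ we again get $\pm 1$ or $\pm\i$ according to the sign of $T$. There is no real obstacle here: the argument is essentially a transcription of the proof of Lemma \ref{tree_det} with the $-1$'s on subdiagonal entries of $N^{\pm}$ replaced by $+1$'s, and it is worth remarking that for this reason the entire proof could be compressed to a single sentence citing that lemma, since at each inductive step only the entry of $M^{\pm}$ in the pendant row and its position (not the sign chosen by the orientation) enters the computation.
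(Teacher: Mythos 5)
Your proof is correct and takes exactly the route the paper intends: the paper's own proof is the single sentence ``the proof is similar to that for an oriented net incidence matrix,'' i.e.\ the induction of Lemma \ref{tree_det}, and your write-up is precisely that adaptation. Your key observation --- that each cofactor expansion is along a row or column with a single nonzero entry, so the orientation signs of $N^{\pm}$ never actually enter and only the value $1$ or $\i$ of the pendant-edge entry matters --- is the right justification for why the transcription goes through verbatim.
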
	

\begin{proof}
The proof is similar to that for an oriented net incidence matrix.
\end{proof}

A {\it positive spanning tree} of a simple signed graph $G$ is a spanning tree of $G$ that is positive (i.e., it contains an even number of negative edges). A {\it negative spanning tree} of $G$ is a spanning tree of $G$ that is negative (i.e., it contains an odd number of negative edges).

\begin{example}
The signed paw  $G$  in Figure \ref{paw} has
one positive spanning tree $T_1$ and two negative spanning trees $T_2$ and $T_3$ (see in Figure \ref{spanning trees of paw}). 

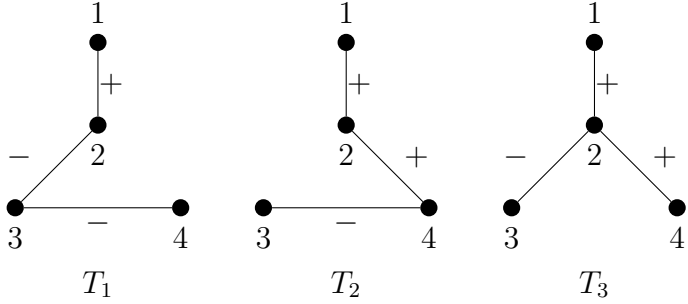
\begin{figure}
	\begin{center}
	\begin{tikzpicture}%[colorstyle/.style={circle, fill, black, scale = .5}]
	[scale=1.1,colorstyle/.style={circle, draw=black!100,fill=black!100, thick, inner sep=0pt, minimum size=2mm},>=stealth]
		\node (1) at (0,1)[colorstyle, label=above:$1$]{};
		\node (2) at (0,0)[colorstyle, label=below:$2$]{};
		\node (3) at (-1,-1)[colorstyle, label=below:$3$]{};
		\node (4) at (1,-1)[colorstyle, label=below:$4$]{};
		\node at (0,-1.5)[label=below:$T_1$]{};
		
		\draw [] (1)--(2)--(3)--(4);
		%\node at (0.85,0)[label=below:{$+$}]{};
		\node at (-0.95,0)[label=below:{$-$}]{};
		\node at (0,-.8)[label=below:{$-$}]{};
		\node at (-.25,0.5)[label=right:{$+$}]{};
		
		\node (1b) at (3,1)[colorstyle, label=above:$1$]{};
		\node (2b) at (3,0)[colorstyle, label=below:$2$]{};
		\node (3b) at (2,-1)[colorstyle, label=below:$3$]{};
		\node (4b) at (4,-1)[colorstyle, label=below:$4$]{};
		\node at (3,-1.5)[label=below:$T_2$]{};
		
		\draw [] (1b)--(2b)--(4b)--(3b);
		\node at (3.85,0)[label=below:{$+$}]{};
		%\node at (2.05,0)[label=below:{$-$}]{};
		\node at (3,-.8)[label=below:{$-$}]{};
		\node at (2.75,0.5)[label=right:{$+$}]{};
		
		\node (1c) at (6,1)[colorstyle, label=above:$1$]{};
		\node (2c) at (6,0)[colorstyle, label=below:$2$]{};
		\node (3c) at (5,-1)[colorstyle, label=below:$3$]{};
		\node (4c) at (7,-1)[colorstyle, label=below:$4$]{};
		\node at (6,-1.5)[label=below:$T_3$]{};
		
		\draw [] (1c)--(2c)--(3c);	
		\draw [] (2c)--(4c);
		\node at (6.85,0)[label=below:{$+$}]{};
		\node at (5.05,0)[label=below:{$-$}]{};
		%\node at (6,-.8)[label=below:{$-$}]{};
		\node at (5.75,0.5)[label=right:{$+$}]{};
		
	\end{tikzpicture}
	\caption{Spanning trees of the signed paw in Figure \ref{paw}}\label{spanning trees of paw}
	\end{center}
	\end{figure}
\end{example}

The following is the matrix tree theorem analog for the net Laplacian of a signed graph. 
\begin{theorem}\label{mainresult}
	Let $G$ be a simple connected signed graph on $n\geq 2$ vertices $1,2,\ldots,n$ with the net Laplacian matrix $L^{\pm}$. Then for each $i=1,2,\ldots,n$, 	$\det(L^{\pm}(i))$ is the number of positive spanning trees minus the number of negative spanning trees of $G$.
\end{theorem}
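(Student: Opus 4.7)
The plan is to mimic the classical matrix--tree argument by combining the Cauchy--Binet decomposition
\[
\det(L^{\pm}(i))=\sum_{S}\det(N^{\pm}(i;S])^{2}
\]
from Observation \ref{CB3}(b) with the determinant computations obtained in Lemmas \ref{tree_det} and \ref{incidencedet}. Here the sum runs over all $(n-1)$-subsets $S$ of the edge index set $\{1,\ldots,m\}$; to each such $S$ I attach the spanning subgraph $H_S$ of $G$ on vertex set $V(G)$ whose edges are $\{e_\ell:\ell\in S\}$, and I classify each term by the structure of $H_S$.

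When $H_S$ is a spanning tree $T$ of $G$, Lemma \ref{tree_det} applies directly to $T$ with deleted vertex $i$ and gives $\det(N^{\pm}(i;S])=\pm 1$ if $T$ is positive and $\det(N^{\pm}(i;S])=\pm\i$ if $T$ is negative. Squaring yields a contribution of $+1$ for each positive spanning tree and $-1$ for each negative spanning tree.

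When $H_S$ is not a spanning tree, the goal is to show $\det(N^{\pm}(i;S])=0$. Since $|S|=n-1$ and $H_S$ is not a tree, $H_S$ must be disconnected; let its components be $C_1,\ldots,C_k$ with $k\geq 2$, labeled so that $i\in V(C_1)$. After permuting rows and columns to group them by component, $N^{\pm}(i;S]$ becomes block diagonal with diagonal blocks of sizes $(|V(C_1)|-1)\times|E(C_1)|$ and $|V(C_j)|\times|E(C_j)|$ for $j\geq 2$. Because the total number of rows equals the total number of columns, if any block is non-square then some block is thin (its rows are supported on too few columns), forcing linear dependence of those rows and hence a zero determinant. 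If instead every block is square, then $C_1$ is a tree and every $C_j$ with $j\geq 2$ is a connected graph with $|E(C_j)|=|V(C_j)|$, i.e., unicyclic; the corresponding diagonal block is then an oriented net incidence matrix of a signed unicyclic graph, whose determinant is zero by Lemma \ref{incidencedet}, so the product of block determinants again vanishes.

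Combining the two cases, only spanning-tree subsets $S$ contribute to the Cauchy--Binet sum, and each contributes $+1$ or $-1$ according to its sign, which gives $\det(L^{\pm}(i))$ as the number of positive spanning trees minus the number of negative spanning trees. The main obstacle is the second case: one has to verify both halves of the dichotomy, handling the non-square subcase via a rank/dimension argument and the all-square subcase via Lemma \ref{incidencedet}. Everything else is bookkeeping from the two earlier lemmas.
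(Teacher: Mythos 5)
Your proposal is correct and follows essentially the same route as the paper: the Cauchy--Binet expansion from Observation \ref{CB3}, Lemma \ref{tree_det} for the spanning-tree terms, and the vanishing of the incidence determinant of a signed cycle (Lemmas \ref{det of inc of a cycle}/\ref{incidencedet}) for the remaining terms. Your block-diagonal square/non-square dichotomy is just a cleaner packaging of the paper's argument that a non-tree $H_S$ has a tree component (giving dependent rows) and a component with a cycle (giving dependent columns).
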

\begin{proof}
By Observation \ref{CB3}, 
	$$\det(L^{\pm}(i))=\sum_{S} \det(N^{\pm}(i;S])^2,$$
where the summation runs over all $(n-1)$-subsets $S$ of $\{1,2,\ldots,m\}$. Note that each such $S$ corresponds to a spanning subgraph $H_S$ of $G$ consisting of $n-1$ edges indexed by $S$ and possibly with some isolated vertices. Suppose $H_S$ is not a spanning tree of $G$. Then we show $\det(N^{\pm}(i;S])=0$.  Since $H_S$ is not a spanning tree of $G$, $H_S$ contains a tree component $T$ and a connected component $H'$ containing a cycle. If $i$ is not in $T$, then the rows of $N^{\pm}(i;S]$ corresponding to the vertices of $T$ are linearly dependent and consequently $\det(N^{\pm}(i;S])=0$. Suppose $i$ is in $T$ and $S'\subseteq S$ corresponds to the edges of a cycle in $H'$ on vertices $v_{i_1},v_{i_2},\ldots,v_{i_t}$, $t=|S'|$. Then $\det(N^{\pm}[i_1,i_2,\ldots,i_t;S'])=0$ by Lemma \ref{det of inc of a cycle}. Thus the columns of $N^{\pm}(i;S]$ corresponding to $S'$ are linearly dependent and consequently $\det(N^{\pm}(i;S])=0$.\\

Suppose $\mathcal T^+$ and $\mathcal T^-$ are the sets of all positive and negative spanning trees of $G$ respectively. Then
\begin{eqnarray*}
\det(L^{\pm}(i))
=\sum_{S} \det(N^{\pm}(i;S])^2
&=&\sum_{H_S\in \mathcal T^+}(\det(N^{\pm}(i;S])^2 
+\sum_{H_S\in \mathcal T^-}\det(N^{\pm}(i;S])^2\\
&=&\sum_{H\in \mathcal T^+}(\pm 1)^2 
+\sum_{H\in \mathcal T^-}(\pm i)^2 \;(\text{by Lemma \ref{tree_det}})\\
&=&|\mathcal T^+|-|\mathcal T^-|.   
\end{eqnarray*}
\end{proof}	

\begin{obs}
The preceding theorem implies the matrix tree theorem for a graph when it is considered as a signed graph with all positive edges.
\end{obs}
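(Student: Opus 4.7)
The plan is to specialize Theorem \ref{mainresult} to the case where every edge of $G$ carries a positive sign and verify that each ingredient collapses to its classical counterpart. First, I would observe that when $\sigma(e_\ell)=1$ for every edge $e_\ell$, the adjacency matrix $A$ coincides with $|A|$, the adjacency matrix of the underlying unsigned graph $|G|$. Likewise, since the net degree $d^{\pm}(v)$ counts positive edges minus negative edges and there are no negative edges, $d^{\pm}(v)$ equals the ordinary degree $d(v)$, so $D^{\pm}=D$. Therefore $L^{\pm}=D^{\pm}-A=D-A=L$, the standard (unsigned) Laplacian of $G$.

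Next, I would analyze the spanning tree sets $\mathcal{T}^+$ and $\mathcal{T}^-$ from the theorem. Since $G$ has no negative edges, any spanning tree of $G$ contains $0$ negative edges, and $0$ is an even number, so every spanning tree is positive by definition. Thus $\mathcal{T}^-=\varnothing$ and $|\mathcal{T}^+|$ equals the total number $\tau(G)$ of spanning trees of $G$ in the usual (unsigned) sense.

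Finally, I would combine these two observations: Theorem \ref{mainresult} yields, for every $i \in \{1,2,\ldots,n\}$,
\[
\det(L(i)) \;=\; \det(L^{\pm}(i)) \;=\; |\mathcal{T}^+| - |\mathcal{T}^-| \;=\; \tau(G) - 0 \;=\; \tau(G),
\]
which is precisely the classical matrix tree theorem: every cofactor of the Laplacian equals the number of spanning trees. There is no real obstacle here; the observation is purely a matter of checking that the sign-dependent definitions degenerate correctly in the all-positive case, so the argument is essentially a one-line specialization once the matrix and combinatorial identifications above are made explicit.
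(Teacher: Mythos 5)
Your proof is correct and is exactly the specialization the paper intends: the paper states this as an unproved observation, and your verification that $D^{\pm}=D$, $L^{\pm}=L$, and $\mathcal{T}^-=\varnothing$ in the all-positive case is the natural (and only) argument needed. No gaps.
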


\begin{corollary}
Let $G$ be a simple connected signed graph on $n\geq 2$ vertices with the net Laplacian matrix $L^{\pm}$. Let $L_{|G|}$ be the Laplacian matrix of  the underlying unsigned graph $|G|$ of the signed graph $G$. Then for each $i=1,2,\ldots,n$, 
\begin{enumerate}
\item[(a)] $\frac{1}{2}\left[ \det(L_{|G|}(i))+\det(L^{\pm}(i))\right]$ is the number of positive spanning trees of $G$, and

\item[(b)] $\frac{1}{2}\left[ \det(L_{|G|}(i))-\det(L^{\pm}(i))\right]$ is the number of negative spanning trees of $G$.
\end{enumerate}
\end{corollary}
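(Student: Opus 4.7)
The plan is to combine Theorem~\ref{mainresult} with the classical Matrix Tree Theorem applied to the underlying unsigned graph $|G|$. The key observation is that spanning trees of $G$ and spanning trees of $|G|$ are in natural bijection (the same edge sets, with the signs merely carried along), and every spanning tree of $G$ is either positive or negative. Letting $\mathcal T^+$ and $\mathcal T^-$ denote the sets of positive and negative spanning trees of $G$, I expect to establish the two identities
\[
\det(L_{|G|}(i)) = |\mathcal T^+| + |\mathcal T^-| \qquad \text{and} \qquad \det(L^{\pm}(i)) = |\mathcal T^+| - |\mathcal T^-|.
\]

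The second identity is exactly Theorem~\ref{mainresult}. For the first, one option is to invoke the classical Matrix Tree Theorem directly, but a self-contained alternative is to apply Theorem~\ref{mainresult} to the all-positive signed graph $|G|$ (as the observation immediately preceding this corollary already highlights): in that setting every spanning tree is positive, $L^{\pm}$ coincides with $L_{|G|}$, and Theorem~\ref{mainresult} returns $\det(L_{|G|}(i)) = |\mathcal T^+(|G|)|$, which equals the total number of spanning trees of $G$, namely $|\mathcal T^+| + |\mathcal T^-|$.

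Once both identities are in hand, the corollary is immediate: adding them and dividing by $2$ gives part (a), and subtracting and dividing by $2$ gives part (b). I do not anticipate any genuine obstacle; the only thing to be careful about is stating the bijection between spanning trees of $G$ and of $|G|$ explicitly, so that the two Matrix Tree identities can be meaningfully added and subtracted with the same $|\mathcal T^+|$ and $|\mathcal T^-|$ on both sides.
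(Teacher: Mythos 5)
Your proposal is correct and matches the paper's proof: the paper likewise derives the corollary from the two identities $\det(L_{|G|}(i))=|\mathcal T^+|+|\mathcal T^-|$ (classical Matrix Tree Theorem) and $\det(L^{\pm}(i))=|\mathcal T^+|-|\mathcal T^-|$ (Theorem~\ref{mainresult}), then adds and subtracts. Your extra remark that the first identity can also be obtained by applying Theorem~\ref{mainresult} to the all-positive graph $|G|$ is a fine, self-contained touch but not a different approach.
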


\begin{proof}
Suppose $\mathcal T^+$ and $\mathcal T^-$ are the sets of all positive and negative spanning trees of $G$ respectively. Then the proof follows from the following:
\[\det(L_{|G|}(i))=|\mathcal T^+|+|\mathcal T^-| \text{ and } \det(L^{\pm}(i))=|\mathcal T^+|-|\mathcal T^-|\]
\end{proof}

\begin{example}
The signed paw  $G$  in Figure \ref{paw} has
one positive spanning tree $T_1$ and two negative spanning trees $T_2$ and $T_3$ (see in Figure \ref{spanning trees of paw}). For each $i=1,2,3,4$, 	
\[\det(L^{\pm}(i))=1-2=-1.\]
The number of positive spanning trees of $G$ is 
\[\frac{1}{2}\left[\det(L_{|G|}(i))+\det(L^{\pm}(i))\right]
=\frac{1}{2}(3-1)=1.\]
The number of negative spanning trees of $G$ is 
\[\frac{1}{2}\left[\det(L_{|G|}(i))-\det(L^{\pm}(i))\right]
=\frac{1}{2}(3+1)=2.\]
\end{example}

\section{Signless net Laplacian}
A {\it $TU$-graph} is a graph whose connected components are trees or odd-unicyclic graphs. A {\it $TU$-subgraph} of $G$ is a subgraph of $G$ that is a $TU$-graph. We are using the same definitions for signed $TU$-graphs and signed $TU$-subgraphs of a signed graph (which is different from the definition given in \cite{Belardo1}). According to our definition before, a signed unicyclic graph is negative if it has an odd number of negative edges. A {\it negative component} of a signed $TU$-graph $G$ is a negative tree or a negative odd-unicyclic graph. The number of negative components in a signed $TU$-graph $G$ is denoted by $b^-(G)$. If $G$ is an odd-unicyclic graph, then the determinant of its net incidence matrix is $\pm 2\i^{b^-(G)}$ by Lemma \ref{incidencedet}.

\begin{figure}[!htb]
	\begin{center}
	\begin{tikzpicture}%[colorstyle/.style={circle, fill, black, scale = .5}]
	[scale=1,colorstyle/.style={circle, draw=black!100,fill=black!100, thick, inner sep=0pt, minimum size=2mm},>=stealth]
	    \node (1) at (0,2)[colorstyle, label=right:$1$]{};
		\node (2) at (0,1)[colorstyle, label=right:$2$]{};
		\node (3) at (0,0)[colorstyle, label=below:$3$]{};
		\node (4) at (-1,-1)[colorstyle, label=below:$4$]{};
		\node (5) at (1,-1)[colorstyle, label=below:$5$]{};
		\node at (0,-1.5)[label=below:$G$]{};
		\draw [] (1)--(2)--(3)--(4)--(5)--(3);
        
        \node at (0,1.5)[label=left:{$+$}]{};%e1
        \node at (0,0.5)[label=left:{$-$}]{};%e2
        \node at (-0.95,0)[label=below:{$+$}]{};%e3
        \node at (0,-.8)[label=below:{$-$}]{};%e4
        \node at (0.85,0)[label=below:{$+$}]{};%e5
		
	\end{tikzpicture}	
	\caption{A signed graph $G$}\label{extended paw}
	\end{center}
\end{figure}
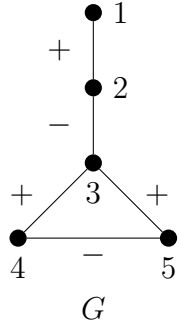

\begin{example}
Consider the signed graph $G$ in Figure \ref{extended paw}. All the spanning $TU$-subgraphs of $G$  with $4$ edges consisting of a unique tree on vertex $1$ are $H_1$, $H_2$, $H_3$, $H_4$ and $H_5$ (see Figure \ref{TU subgraphs of extendd paw}). Note that $b^-(H_3)=b^-(H_5)=1$ and $b^-(H_1)=b^-(H_2)=b^-(H_4)=0$.
\end{example}

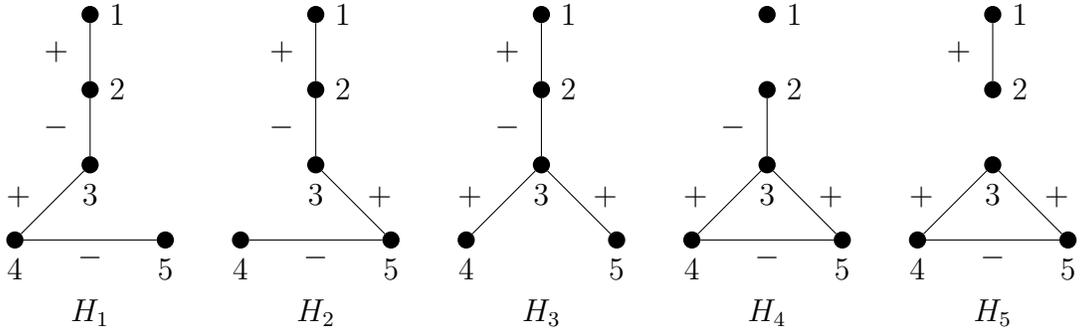
\begin{figure}
	\begin{center}
	\begin{tikzpicture}%[colorstyle/.style={circle, fill, black, scale = .5}]
	[scale=1,colorstyle/.style={circle, draw=black!100,fill=black!100, thick, inner sep=0pt, minimum size=2mm},>=stealth]
	    \node (1) at (0,2)[colorstyle, label=right:$1$]{};
		\node (2) at (0,1)[colorstyle, label=right:$2$]{};
		\node (3) at (0,0)[colorstyle, label=below:$3$]{};
		\node (4) at (-1,-1)[colorstyle, label=below:$4$]{};
		\node (5) at (1,-1)[colorstyle, label=below:$5$]{};
		\node at (0,-1.5)[label=below:$H_1$]{};
		\draw [] (1)--(2)--(3)--(4)--(5);
		\node at (0,1.5)[label=left:{$+$}]{};%e1
        \node at (0,0.5)[label=left:{$-$}]{};%e2
        \node at (-0.95,0)[label=below:{$+$}]{};%e3
        \node at (0,-.8)[label=below:{$-$}]{};%e4
        %\node at (0.85,0)[label=below:{$+$}]{};%e5
		
		\node (1b) at (3,2)[colorstyle, label=right:$1$]{};
		\node (2b) at (3,1)[colorstyle, label=right:$2$]{};
		\node (3b) at (3,0)[colorstyle, label=below:$3$]{};
		\node (4b) at (2,-1)[colorstyle, label=below:$4$]{};
		\node (5b) at (4,-1)[colorstyle, label=below:$5$]{};
		\node at (3,-1.5)[label=below:$H_2$]{};
		\draw [] (1b)--(2b)--(3b)--(5b)--(4b);
		\node at (3,1.5)[label=left:{$+$}]{};%e1
        \node at (3,0.5)[label=left:{$-$}]{};%e2
        %\node at (-0.95,0)[label=below:{$+$}]{};%e3
        \node at (3,-.8)[label=below:{$-$}]{};%e4
        \node at (3.85,0)[label=below:{$+$}]{};%e5
		
		\node (1c) at (6,2)[colorstyle, label=right:$1$]{};
		\node (2c) at (6,1)[colorstyle, label=right:$2$]{};
		\node (3c) at (6,0)[colorstyle, label=below:$3$]{};
		\node (4c) at (5,-1)[colorstyle, label=below:$4$]{};
		\node (5c) at (7,-1)[colorstyle, label=below:$5$]{};
		\node at (6,-1.5)[label=below:$H_3$]{};
		\draw [] (1c)--(2c)--(3c)--(4c);	
		\draw [] (3c)--(5c);
		\node at (6,1.5)[label=left:{$+$}]{};%e1
        \node at (6,0.5)[label=left:{$-$}]{};%e2
        \node at (5.05,0)[label=below:{$+$}]{};%e3
        %\node at (6,-.8)[label=below:{$-$}]{};%e4
        \node at (6.85,0)[label=below:{$+$}]{};%e5
		
		\node (1d) at (9,2)[colorstyle, label=right:$1$]{};
		\node (2d) at (9,1)[colorstyle, label=right:$2$]{};
		\node (3d) at (9,0)[colorstyle, label=below:$3$]{};
		\node (4d) at (8,-1)[colorstyle, label=below:$4$]{};
		\node (5d) at (10,-1)[colorstyle, label=below:$5$]{};
		\node at (9,-1.5)[label=below:$H_4$]{};
		\draw [] (2d)--(3d)--(4d)--(5d)--(3d);
		%\node at (9,1.5)[label=left:{$+$}]{};%e1
        \node at (9,0.5)[label=left:{$-$}]{};%e2
        \node at (8.05,0)[label=below:{$+$}]{};%e3
        \node at (9,-.8)[label=below:{$-$}]{};%e4
        \node at (9.85,0)[label=below:{$+$}]{};%e5
		
		\node (1e) at (12,2)[colorstyle, label=right:$1$]{};
		\node (2e) at (12,1)[colorstyle, label=right:$2$]{};
		\node (3e) at (12,0)[colorstyle, label=below:$3$]{};
		\node (4e) at (11,-1)[colorstyle, label=below:$4$]{};
		\node (5e) at (13,-1)[colorstyle, label=below:$5$]{};
		\node at (12,-1.5)[label=below:$H_5$]{};
		\draw [] (3e)--(4e)--(5e)--(3e);
		\draw [] (1e)--(2e);
		\node at (12,1.5)[label=left:{$+$}]{};%e1
        %\node at (0,0.5)[label=left:{$-$}]{};%e2
        \node at (11.05,0)[label=below:{$+$}]{};%e3
        \node at (12,-.8)[label=below:{$-$}]{};%e4
        \node at (12.85,0)[label=below:{$+$}]{};%e5
	
	\end{tikzpicture}
	\caption{Spanning $TU$-subgraphs of $G$ (in Figure \ref{extended paw}) with $4$ edges containing a unique tree on vertex $1$}\label{TU subgraphs of extendd paw}
	\end{center}
\end{figure}

\begin{theorem}\label{mainlemma}
	Let $G$ be a simple connected signed graph on $n\geq 2$ vertices and $m$ edges with the net incidence matrix $M^{\pm}$. Let $i$ be an integer from $\{1,2,\ldots,n\}$. Let  $S$  be an $(n-1)$-subset of $\{1,2,\ldots,m\}$ and $H$ be a spanning subgraph of $G$ with edges indexed by $S$ and possibly with some isolated vertices. 
\begin{enumerate}
\item[(a)] If $H$ is not a spanning $TU$-subgraph of $G$, then $\det(M^{\pm}(i;S])=0$.

\item[(b)] Suppose $H$ is a spanning $TU$-subgraph of $G$ consisting of a unique tree $T$ and $c$  odd-unicyclic graphs $U_1, U_2, \ldots, U_c$.  
    \begin{enumerate}
        \item[(i)] If $i$ is a vertex of $U_j$ for some $j=1,2,\ldots,c$, then $\det(M^{\pm}(i;S]) =0$. 
        
        \item[(ii)] If $i$ is a vertex of $T$, then  $\det(M^{\pm}(i;S]) =\pm 2^c\i^{b^-(H)}$.
        
    \end{enumerate}

\end{enumerate}
\end{theorem}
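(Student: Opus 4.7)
The plan is to exploit the block-diagonal structure of $M^{\pm}(i;S]$ coming from the connected components of $H$. After permuting rows to group vertices by component and columns to group edges by component, $M^{\pm}(i;S]$ becomes block diagonal: every column of $M^{\pm}$ has its two nonzero entries in rows corresponding to the endpoints of its edge, and these endpoints lie in the same component of $H$. For a component $C$ with $p_C$ vertices and $q_C$ edges, the corresponding block has size $p_C \times q_C$ if $i \notin C$, and $(p_C - 1) \times q_C$ if $i \in C$.

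The key observation I would use is that a square block-diagonal matrix is non-singular only when every block is itself square: if any block is taller than wide, its rows are rank-deficient and the full matrix inherits that deficiency. Since $\sum_C p_C = n$ and $\sum_C q_C = n-1$, requiring squareness of every block forces the component containing $i$ to be a tree and each other component to be unicyclic. This immediately handles part (a) and part (b)(i): if $H$ is not a $TU$-subgraph or if $i$ does not lie in the tree, then either some block is non-square (so $\det(M^{\pm}(i;S]) = 0$), or every block is square but some unicyclic-component block has an even cycle and is singular by Lemma~\ref{incidencedet} (again $\det(M^{\pm}(i;S]) = 0$).

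For part (b)(ii) every block is square and non-singular. The tree block contributes $\pm 1$ or $\pm \i$ by Lemma~\ref{tree_det M(j;)} according as $T$ is positive or negative, and each $U_j$ block contributes $\pm 2\,\i^{e^-_{U_j}}$ by Lemma~\ref{incidencedet}, where $e^-_{U_j}$ is the number of negative edges in $U_j$. Multiplying these yields $\pm 2^c$ times a power of $\i$ whose exponent has the same parity as $e^-(H)$, and hence as $b^-(H)$, since in each component the parity of $e^-$ is exactly the indicator of that component being negative. Because the two exponents agree modulo $2$, the accumulated $\i^{\text{exp}}$ equals $\pm \i^{b^-(H)}$ and the stray sign is absorbed, giving $\det(M^{\pm}(i;S]) = \pm 2^c \i^{b^-(H)}$.

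I expect the main obstacle to be the bookkeeping in this last step: tracking the signs and powers of $\i$ collected from each block and reconciling the cumulative $\i^{e^-(H)}$ with the target $\i^{b^-(H)}$ via $e^-(H) \equiv b^-(H) \pmod 2$. Everything else reduces to a direct application of Lemmas~\ref{tree_det M(j;)} and~\ref{incidencedet} combined with the standard rank argument for block-diagonal matrices with rectangular blocks.
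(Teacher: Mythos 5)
Your proof is correct and follows essentially the same route as the paper's: the direct-sum (block-diagonal) decomposition of $M^{\pm}(i;S]$ over the components of $H$, with Lemma~\ref{incidencedet} and Lemma~\ref{tree_det M(j;)} supplying the block determinants and the parity identity $e^-(H)\equiv b^-(H)\pmod 2$ absorbed into the $\pm$ sign. Your square-block counting argument ($\sum p_C=n$, $\sum q_C=n-1$) gives a slightly more systematic case analysis for parts (a) and (b)(i) than the paper's write-up, which focuses on a tree component avoiding $i$, but the underlying mechanism is identical.
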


\begin{proof}
(a) Suppose $H$ is not a  $TU$-graph on $n$ vertices and $n-1$ edges. Then $n\geq 5$ and  $H$ has at least two tree components and a component containing at least two cycles. Suppose $T$ is a tree component of $H$ that does not contain vertex $i$. If $T$ consists of just one vertex, then the corresponding row in $M^{\pm}(i;)$ is a zero row giving $\det(M^{\pm}(i;))=0$. Now suppose  $T$ has at least two vertices.  
Consider  the square submatrix $M'$ of $M^{\pm}(i;)$ with rows corresponding to the vertices of $T$ and columns corresponding to the edges of $T$. Since the row rank of $M'$ is less than or equal to the number of columns of $M'$, the rows of $M'$ and consequently of $M^{\pm}(i;)$ corresponding to the vertices of $T$ are linearly dependent. Thus $\det(M^{\pm}(i;))=0$.\\

(b) If $i$ is a vertex of $U_j$ for some $j=1,2,\ldots,c$, then the columns of the submatrix of $M^{\pm}(i;S]$ corresponding to $U_j$ are linearly dependent which implies $\det(M^{\pm}(i;S])=0$. If $i$ is a vertex of the positive (respectively negative) tree $T$, then $M^{\pm}(i;S]$ is a direct sum of incidence matrices of odd-unicyclic graphs $U_1, U_2, \ldots , U_c$  and the incidence matrix of the tree $T$ with one row deleted and consequently $\det(M^{\pm}(i;S])=(\pm 2^c\i^{b^-(H)})\cdot (\pm 1)= \pm 2^c\i^{b^-(H)}$ (respectively $\det(M^{\pm}(i;S])=(\pm 2^c\i^{-1+b^-(H)})\cdot (\pm \i)= \pm 2^c\i^{b^-(H)}$) by Lemmas \ref{incidencedet} and \ref{tree_det M(j;)}.
\end{proof}

The following is the matrix tree theorem analog for the signless net Laplacian of a signed graph.
\begin{theorem}
Let $G$ be a simple connected signed graph on $n\geq 2$ vertices $1,2,\ldots,n$ with the signless net Laplacian matrix $Q^{\pm}$. Then for each $i=1,2,\ldots,n$, \[\det(Q^{\pm}(i))=
\sum_{H\in \mathcal U_e}4^{c(H)}
-\sum_{H\in\mathcal U_o} 4^{c(H)},\]
where $\mathcal U_e$ is the set of all spanning $TU$-subgraphs $H$ of $G$ with $n-1$ edges and an even number of negative components  consisting of a unique tree on vertex $i$ and  $c(H)$ odd-unicyclic graphs and 
$\mathcal U_o$ is the set of all spanning $TU$-subgraphs $H$ of $G$ with $n-1$ edges and an odd number of negative components  consisting of a unique tree on vertex $i$ and  $c(H)$ odd-unicyclic graphs.
\end{theorem}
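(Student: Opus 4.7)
The plan is to mirror the proof of Theorem \ref{mainresult}, but with the net incidence matrix $M^{\pm}$ in place of the oriented version $N^{\pm}$, and then evaluate the resulting squared determinants via Theorem \ref{mainlemma} (which plays the role that Lemma \ref{tree_det} played in the net Laplacian case). Specifically, I would first invoke Proposition 2.1(a) to write $Q^{\pm}=M^{\pm}(M^{\pm})^{T}$, so that after deleting row and column $i$ we obtain $Q^{\pm}(i)=M^{\pm}(i;)\,M^{\pm}(i;)^{T}$. Applying the Cauchy--Binet formula (Theorem \ref{CB}) then yields
\[
\det(Q^{\pm}(i))=\sum_{S}\det(M^{\pm}(i;S])^{2},
\]
where the sum ranges over all $(n-1)$-subsets $S$ of $\{1,2,\ldots,m\}$.

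Next, I would identify each such $S$ with the spanning subgraph $H_{S}$ of $G$ on $n-1$ edges (with possibly some isolated vertices). By Theorem \ref{mainlemma}(a), the summand vanishes unless $H_{S}$ is a spanning $TU$-subgraph, and by Theorem \ref{mainlemma}(b)(i) it also vanishes whenever vertex $i$ lies on one of the odd-unicyclic components. Hence only those $H_S$ that consist of a unique tree containing $i$ together with $c(H_S)$ odd-unicyclic components contribute, and for each such $H_S$ Theorem \ref{mainlemma}(b)(ii) gives
\[
\det(M^{\pm}(i;S])=\pm 2^{c(H_S)}\,\i^{\,b^{-}(H_S)}.
\]

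Squaring this value is the one step that deserves a short comment: since $\i^{2}=-1$, we obtain $\det(M^{\pm}(i;S])^{2}=4^{c(H_S)}(-1)^{b^{-}(H_S)}$, which contributes $+4^{c(H_S)}$ when $b^{-}(H_S)$ is even and $-4^{c(H_S)}$ when $b^{-}(H_S)$ is odd. Partitioning the surviving terms according to the parity of $b^{-}(H_S)$ into the sets $\mathcal{U}_{e}$ and $\mathcal{U}_{o}$ described in the statement gives
\[
\det(Q^{\pm}(i))=\sum_{H\in \mathcal{U}_e}4^{c(H)}-\sum_{H\in \mathcal{U}_o} 4^{c(H)},
\]
which is the desired identity.

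The argument is essentially bookkeeping on top of Theorem \ref{mainlemma}, so I do not expect a genuine obstacle; the subtle point is that $M^{\pm}$ has entries in $\{0,1,\i\}$ rather than real entries, so Cauchy--Binet is applied in its bilinear (not Hermitian) form $\det(AA^{T})=\sum_S \det(A(;S])^{2}$, and the imaginary unit is exactly what converts the sign of $(-1)^{b^{-}(H)}$ into the even/odd split in the final formula. Once this is noted the proof is immediate.
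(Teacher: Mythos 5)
Your proposal is correct and follows essentially the same route as the paper: decompose $Q^{\pm}(i)=M^{\pm}(i;)M^{\pm}(i;)^T$, apply Cauchy--Binet, discard the vanishing terms via Theorem \ref{mainlemma}, and square $\pm 2^{c(H)}\i^{b^-(H)}$ to obtain the even/odd split. Your explicit remark that Cauchy--Binet is used in its bilinear form $\det(AA^T)=\sum_S\det(A(;S])^2$ over $\mathbb{C}$ is a nice clarification the paper leaves implicit.
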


\begin{proof}
First note that
\[\det(Q^{\pm}(i))=\sum_{S} \det(M^{\pm}(i;S])^2,\]
where the summation runs over all $(n-1)$-subsets $S$ of $\{1,2,\ldots,m\}$. Note that each such $S$ corresponds to a spanning subgraph $H_S$ of $G$  consisting of $n-1$ edges indexed by $S$ and possibly with some isolated vertices. Note by Theorem \ref{mainlemma}, $\det(M^{\pm}(i;S]) \neq 0$ only when $H_S$ is a spanning $TU$-subgraph of $G$ consisting of a unique tree $T$ containing vertex $i$ and $c(H_S)$  odd-unicyclic graphs. Then
\begin{eqnarray*}
\det(Q^{\pm}(i))
=\sum_{S} \det(M^{\pm}(i;S])^2
&=&\sum_{H_S\in \mathcal U_e}(\det(M^{\pm}(i;S])^2 
+\sum_{H_S\in \mathcal U_o}\det(M^{\pm}(i;S])^2\\
&=&\sum_{H_S\in \mathcal U_e}(\pm 2^{c(H_S)} \i^{b^-(H_S)})^2 
+\sum_{H_S\in\mathcal U_o}(\pm 2^{c(H_S)}\i^{b^-(H_S)} )^2 \;(\text{by Theorem \ref{mainlemma}})\\
&=&\sum_{H_S\in \mathcal U_e}(\pm 2^{c(H_S)})^2 
+\sum_{H_S\in\mathcal U_o}(\pm \i 2^{c(H_S)})^2 \\
&=&\sum_{H\in \mathcal U_e}4^{c(H)}
-\sum_{H\in \mathcal U_o} 4^{c(H)}.   
\end{eqnarray*}
\end{proof}

\begin{obs}
The preceding theorem implies the matrix tree theorem for the signless Laplacian of a graph (see \cite[Theorem 2.9]{Mallik}) when it is considered as a signed graph with all positive edges.
\end{obs}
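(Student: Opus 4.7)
The plan is to read the preceding theorem as a template and substitute the all-positive-edge assumption into every object that appears, checking that both sides reduce to the classical signless Laplacian matrix tree formula of \cite{Mallik}. Nothing has to be reproved from scratch; the work is entirely in matching definitions.

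First I would handle the left-hand side. When $\sigma(e)=+1$ for every edge $e$, the net degree $d^{\pm}(v)$ equals the ordinary degree $d(v)$, so $D^{\pm}=D$, and the signed adjacency matrix $A$ coincides with $|A|$, the ordinary $0/1$ adjacency matrix of $|G|$. Hence
\[
Q^{\pm} = D^{\pm}+A = D+|A| = Q_{|G|},
\]
so $\det(Q^{\pm}(i)) = \det(Q_{|G|}(i))$ for every $i$.

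Next I would collapse the right-hand side. Since $G$ has no negative edges, every subgraph of $G$ has zero negative edges; in particular every tree component and every odd-unicyclic component of any spanning $TU$-subgraph $H$ is positive, so $b^{-}(H)=0$. Because $0$ is even, this gives $\mathcal U_o=\emptyset$, while $\mathcal U_e$ becomes the set of all spanning $TU$-subgraphs $H$ of $G$ with $n-1$ edges consisting of a unique tree containing vertex $i$ together with $c(H)$ odd-unicyclic components. The formula in the preceding theorem therefore reduces to
\[
\det(Q_{|G|}(i)) \;=\; \sum_{H\in\mathcal U_e} 4^{c(H)},
\]
with the indexing set described above.

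The only remaining step, and the main (though small) obstacle, is to confirm that this indexing set is literally the family of spanning subgraphs summed in \cite[Theorem 2.9]{Mallik}: spanning $TU$-subgraphs with $n-1$ edges whose unique tree component contains the chosen vertex $i$, weighted by $4^{c(H)}$ where $c(H)$ counts the odd-unicyclic components. Matching these descriptions term-by-term (and noting that in the unsigned setting the notions of \emph{$TU$-graph}, \emph{odd-unicyclic component}, and \emph{tree component} used here coincide with those in \cite{Mallik}) yields the observation with no further computation.
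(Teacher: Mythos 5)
Your proposal is correct and matches the argument the paper intends (the paper states this as an observation without proof, and the intended justification is exactly the definitional substitution you carry out): with all edges positive, $Q^{\pm}=Q_{|G|}$, every spanning $TU$-subgraph has $b^-(H)=0$ so $\mathcal U_o=\emptyset$, and the sum over $\mathcal U_e$ weighted by $4^{c(H)}$ is precisely the formula of \cite[Theorem 2.9]{Mallik}.
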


\begin{example}
For the signed graph $G$ in Figure \ref{extended paw},  $\mathcal U_e=\{H_1,H_2,H_4\}$ and $\mathcal U_o=\{H_3,H_5\}$   corresponding to vertex $1$ (see Figure \ref{TU subgraphs of extendd paw}). 
\begin{eqnarray*}
\det(Q^{\pm}(1))
=\sum_{H\in \mathcal U_e}4^{c(H)}
-\sum_{H\in\mathcal U_o} 4^{c(H)}
&=&\left(4^{c(H_1)}+4^{c(H_2)}+4^{c(H_4)}\right)
-\left(4^{c(H_3)}+4^{c(H_5)}\right)\\
&=&\left(4^0+4^0+4^1\right)
-\left(4^0+4^1\right)\\
&=&6-5\\
&=&1 \end{eqnarray*}
\end{example}

\section{Determinant of a signless net Laplacian}
Since the row-sums of the net Laplacian $L^{\pm}$ of a signed graph $G$ are zeros,  $0$ is an eigenvalue of $L^{\pm}$ and consequently $\det(L^{\pm})=0$. In this section we investigate $\det(Q^{\pm})$, the determinant of the signless net Laplacian $Q^{\pm}$ of a signed graph $G$.

\begin{lemma}\label{oddunicycle}
Let $G$ be a simple signed graph on $n$ vertices and $m\geq n$ edges with the net incidence matrix $M^{\pm}$. Let  $S$  be a $n$-subset of $\{1,2,\ldots,m\}$ and $H$ be a spanning subgraph of $G$ with edges indexed by $S$ and possibly some isolated vertices.  
\begin{enumerate}
\item[(a)]  If one of the connected components of $H$ is not an odd-unicyclic graph, then $\det(M^{\pm}[S])=0$.

\item[(b)] If $H$ consists of $k$ odd-unicyclic connected components and even of them are negative, then $\det(M^{\pm}[S])=\pm 2^k$. If $H$ consists of $k$ odd-unicyclic connected components and odd of them are negative, then $\det(M^{\pm}[S])=\pm 2^k\i$.
\end{enumerate}
\end{lemma}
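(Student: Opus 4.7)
The plan is to decompose $M^{\pm}[S]$ into block-diagonal form along the connected components of $H$ and then apply Lemma \ref{incidencedet} to each block. Because no edge of $H$ joins vertices in distinct components, permuting rows and columns so that each component $C_i$ has its vertices and edges grouped contiguously turns $M^{\pm}[S]$ into a block-diagonal matrix whose $i$-th block is the $|V(C_i)| \times |E(C_i)|$ net incidence matrix $M^{\pm}_{C_i}$. Since $\sum_i|V(C_i)| = n = \sum_i|E(C_i)|$, either every component is unicyclic (so every block is square) or at least one component satisfies $|V(C_i)| > |E(C_i)|$ while another satisfies the reverse inequality.

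For part (a), suppose some component is not odd-unicyclic. If every component is unicyclic, then the offending component is even-unicyclic, so Lemma \ref{incidencedet} contributes a zero factor and the product of block determinants vanishes. Otherwise some component $C$ must satisfy $|V(C)| > |E(C)|$; then the rows of $M^{\pm}[S]$ indexed by $V(C)$ are nonzero only in the $|E(C)|$ columns of $E(C)$, so these $|V(C)|$ rows span a space of dimension at most $|E(C)| < |V(C)|$ and are linearly dependent, forcing $\det(M^{\pm}[S]) = 0$.

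For part (b), every component $U_1, \ldots, U_k$ is odd-unicyclic, so every block is square and the block-diagonal form yields
\[
\det(M^{\pm}[S]) \;=\; \pm\prod_{j=1}^{k}\det\bigl(M^{\pm}_{U_j}\bigr) \;=\; \pm\prod_{j=1}^{k}\bigl(\pm 2\,\i^{e_j^-}\bigr) \;=\; \pm 2^{k}\,\i^{\,\sum_{j} e_j^-},
\]
where $e_j^-$ counts the negative edges of $U_j$ (Lemma \ref{incidencedet}) and the leading $\pm$ absorbs the permutation sign together with the block-level signs. The final step is a parity identity: $U_j$ is a negative component exactly when $e_j^-$ is odd, so $\sum_j e_j^- \equiv b^-(H) \pmod 2$. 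Hence $\i^{\sum_j e_j^-}$ is $\pm 1$ when $b^-(H)$ is even and $\pm\i$ when $b^-(H)$ is odd, giving $\det(M^{\pm}[S]) = \pm 2^k$ or $\pm 2^k\i$ respectively.

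The main obstacle is the case split in part (a): one must confirm that every way a component can fail to be odd-unicyclic (even-cyclic, multi-cyclic, or a tree accompanying a multi-cyclic component) is caught by one of the two vanishing mechanisms above. Once that is in place, part (b) reduces cleanly to the product computation together with the parity identity $\sum_j e_j^- \equiv b^-(H) \pmod 2$.
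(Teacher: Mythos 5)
Your proof is correct and follows essentially the same route as the paper's: decompose $M^{\pm}[S]$ as a direct sum over the components of $H$, apply Lemma \ref{incidencedet} to the odd-unicyclic blocks, and kill the determinant via row dependence when a tree component (or an even-unicyclic block) appears. You are in fact slightly more explicit than the paper on two points it leaves implicit --- the counting argument showing a non-odd-unicyclic component forces either an even-unicyclic or a tree component, and the parity identity $\sum_j e_j^- \equiv b^-(H) \pmod 2$ needed to replace $\i^{\sum_j e_j^-}$ by $\i^{b^-(H)}$ up to sign.
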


\begin{proof}
(a) Suppose one of the connected components of $H$ is not an odd-unicyclic graph. Then $H$ has a connected component that is a tree or an even-unicyclic graph.  Then by Lemma \ref{incidencedet}, $\det(M^{\pm}[S])=0$ when $H$ has an even-unicyclic component. When $H$ has a tree component $T$, the rows of $M^{\pm}[S]$ corresponding to the vertices of $T$ are linearly dependent and consequently $\det(M^{\pm}[S])=0$. \\

(b) Suppose $H$ consists of $k$ odd-unicyclic components. Since $M^{\pm}[S]$ is a direct sum of the net incidence matrices of $k$ odd-unicyclic graphs, then $\det(M^{\pm}[S])=\pm 2^k\i^{b^-(H)}$ by Lemma \ref{incidencedet}. Thus $\det(M^{\pm}[S])$ is $\pm 2^k\i$ when $b^-(H)$ is odd and $\pm 2^k$ otherwise.
\end{proof}

By Theorem \ref{CB} and \ref{oddunicycle}, we have the following theorem.

\begin{theorem}\label{detQpm}
Let $G$ be a simple signed graph on $n$ vertices with signless net Laplacian matrix $Q^{\pm}$. Then 
$$\det(Q^{\pm})=\sum_{H\in \mathcal U_e^n} 4^{c(H)}-\sum_{H\in \mathcal U_o^n} 4^{c(H)},$$
where $\mathcal U_e^n$ is the set of all spanning $TU$-subgraphs $H$ of $G$ with $n$ edges consisting of $c(H)$ odd-unicyclic graphs including an even number of negative odd-unicyclic graphs and 
$\mathcal U_o^n$ is the set of all spanning $TU$-subgraphs $H$ of $G$ with $n$ edges  consisting of $c(H)$ odd-unicyclic graphs including an odd number of negative odd-unicyclic graphs.
\end{theorem}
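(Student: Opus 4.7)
The plan is to combine the Cauchy–Binet expansion of Theorem \ref{CB} with the incidence matrix evaluation of Lemma \ref{oddunicycle}. By Proposition 2.1(a), $Q^{\pm}=M^{\pm}(M^{\pm})^T$, so applying Cauchy–Binet to the $n\times m$ matrix $M^{\pm}$ gives
\[
\det(Q^{\pm}) \;=\; \det\!\bigl(M^{\pm}(M^{\pm})^T\bigr) \;=\; \sum_{S} \det\!\bigl(M^{\pm}[\,;S]\bigr)^{2},
\]
where $S$ ranges over all $n$-subsets of the edge index set $\{1,2,\ldots,m\}$. Each such $S$ determines a spanning subgraph $H_S$ of $G$ with exactly $n$ edges (allowing isolated vertices).

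Next I would prune the sum using Lemma \ref{oddunicycle}(a): if any connected component of $H_S$ fails to be an odd-unicyclic graph, then $\det(M^{\pm}[\,;S])=0$, so such $S$ contribute nothing. Since $H_S$ has $n$ edges on $n$ vertices, the surviving $H_S$ must be exactly those spanning $TU$-subgraphs all of whose components are odd-unicyclic (no tree components, as a tree component would force a short-count on edges elsewhere). For these, Lemma \ref{oddunicycle}(b) evaluates the determinant as
\[
\det\!\bigl(M^{\pm}[\,;S]\bigr) \;=\; \begin{cases} \pm 2^{c(H_S)} & \text{if } b^{-}(H_S) \text{ is even,}\\ \pm 2^{c(H_S)}\,\i & \text{if } b^{-}(H_S) \text{ is odd,} \end{cases}
\]
where $c(H_S)$ denotes the number of odd-unicyclic components.

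Squaring makes the outer $\pm$ sign irrelevant and produces $4^{c(H_S)}$ in the even case and $-4^{c(H_S)}$ in the odd case. Grouping the nonzero terms according to the parity of $b^{-}(H_S)$ and invoking the definitions of $\mathcal U_e^n$ and $\mathcal U_o^n$ yields
\[
\det(Q^{\pm}) \;=\; \sum_{H \in \mathcal U_e^n} 4^{c(H)} \;-\; \sum_{H \in \mathcal U_o^n} 4^{c(H)},
\]
which is the claimed formula. Since the structural content (vanishing of determinants of non-odd-unicyclic blocks and the $\pm 2^{k}\i^{b^{-}}$ evaluation) is already packaged in Lemma \ref{oddunicycle}, there is no real obstacle here; the only care needed is in bookkeeping — confirming that no tree component can arise with $|E(H_S)|=n$ on $n$ vertices when every component is odd-unicyclic, and that squaring correctly converts the imaginary contributions into the minus sign attached to $\mathcal U_o^n$.
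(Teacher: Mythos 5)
Your proposal is correct and follows essentially the same route as the paper: Cauchy--Binet applied to $Q^{\pm}=M^{\pm}(M^{\pm})^T$ over $n$-subsets of edges, pruning via Lemma \ref{oddunicycle}(a), and evaluating the surviving terms via Lemma \ref{oddunicycle}(b) so that squaring turns the factor $\i^{b^-(H)}$ into the sign separating $\mathcal U_e^n$ from $\mathcal U_o^n$. The only difference is cosmetic: you add the (correct) remark that the surviving subgraphs have no tree components, which the paper leaves implicit in the lemma.
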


\begin{proof}
By Theorem \ref{CB},
$$\det(Q^{\pm})=\det(M^{\pm} (M^{\pm})^T)=\sum_{S} \det(M^{\pm}(;S])^2,$$
where the summation runs over all $n$-subsets $S$ of $\{1,2,\ldots,m\}$. By Lemma \ref{oddunicycle}, 
\begin{eqnarray*}
\det(Q^{\pm})=\sum_{S} \det(M^{\pm}(;S])^2 &=& \sum_{H\in \mathcal U_e^n} (\pm 2^{c(H)})^2-\sum_{H\in \mathcal U_o^n} (\pm 2^{c(H)}\i)^2\\
&=& \sum_{H\in \mathcal U_e^n} 4^{c(H)}-\sum_{H\in \mathcal U_o^n} 4^{c(H)}.
\end{eqnarray*}
\end{proof}

\begin{example}
The signed graph $G$ in Figure \ref{extended paw} is a $TU$-graph, in particular, an odd-unicyclic graph that is positive. Then $\mathcal U_e^5=\{G\}$ and $\mathcal U_o^5=\varnothing$. Thus
\[\det(Q^{\pm})
=\sum_{H\in \mathcal U_e^5}4^{c(H)}
-\sum_{H\in\mathcal U_o^5} 4^{c(H)}
=4^{c(G)}-0\\
=4^1\\
=4.\]
\end{example}

\section{Open problems}
We end this article by posing some relevant open problems.

\begin{question}
Find combinatorial formulas for the coefficients of the characteristic polynomial of the signless net Laplacian matrix $Q^{\pm}$ of a signed graph.
\end{question}
Suppose the characteristic polynomial of signless net Laplacian $Q^{\pm}$ of a simple connected signed graph $G$ on $n$ vertices and $m\geq n$ edges is 
\[P_{Q^{\pm}}(x)=\det(x I_n-Q^{\pm})=x^n+\sum_{i=1}^n a_i x^{n-i}.\]
For $i=1,2,\ldots,n$, $a_i$ will be obtained in terms of spanning $TU$-subgraphs of $G$ with $i$ edges. 

\begin{question}
Characterize the signed graphs whose signless net Laplacian matrix is not invertible. 
\end{question}
One example is a bipartite graph with all positive edges. Using $\det(Q^{\pm})=0$, Theorem \ref{detQpm} gives the following necessary and sufficient condition:
\[\sum_{H\in \mathcal U_e^n} 4^{c(H)}=\sum_{H\in \mathcal U_o^n} 4^{c(H)}\]

\begin{question}
Find the multiplicity of the eigenvalue $0$ of the signless net Laplacian matrix of a signed graph when it is not invertible. 
\end{question}
Examples suggest that the answer may have a connection with the balancedness of cycles in spanning $TU$-subgraphs with $n$ edges. Observation \ref{obs eigenvector} regarding eigenvectors corresponding to the eigenvalue $0$ may be helpful.\\

As pointed out by the reviewer, the net incidence matrix $M^{\pm}$ of a signed graph $G$ can be obtained from the incidence matrix $M$ of its underlying unsigned graph $|G|$ by multiplying its columns corresponding to the negative edges by $\i$. Therefore there are different ways of proving some of the results in this article. \\

\noindent {\bf Acknowledgments}\\
The author is indebted to Keivan Hassani Monfared for the idea of introducing the imaginary number $\i$ in incidence matrices of signed graphs. The author would like to thank Zoran Stani\' c and Thomas Zaslavsky for their comments on some of the results in this article. The author would also like to thank the anonymous reviewer and the handling editor Kevin Vander Meulen for their valuable comments.

\bigskip

\end{document}